\documentclass[11pt]{amsart}
\usepackage{amsmath,amssymb,amscd,mathrsfs}
\usepackage{eepic,epic}
\usepackage{longtable}
\newtheorem{introth}{Theorem}

\newtheorem{introdefinition}[introth]{Definition}

\newtheorem{introremark}[introth]{Remark}
\newenvironment{intrormk}{\begin{introremark}\rm}{\end{introremark}}

\newtheorem{thm}{Theorem}[section]
\newtheorem{prop}[thm]{Proposition}
\newtheorem{lem}[thm]{Lemma}
\newtheorem{cor}[thm]{Corollary}
\newtheorem{definition}[thm]{Definition} 
\newtheorem{remark}[thm]{Remark}
\newtheorem{observation}[thm]{Observation}
\newtheorem{example}[thm]{Example}

\newenvironment{rmk}{\begin{remark}\rm}{\end{remark}}



  \def\R{\mathbb R} 
\def\P{\mathbb P} 

\def\F{\mathcal F}

\newenvironment{(enumerate)}{
  \begin{enumerate}
  
  }{\end{enumerate}}

\begin{document} 
\title[Automorphism groups of Enriques surfaces]{The automorphism groups of 
Enriques surfaces covered by symmetric quartic surfaces}
\author[S. Mukai]{Shigeru Mukai}
\author[H. Ohashi]{Hisanori Ohashi}
\address{Research Institute for Mathematical Sciences,
Kyoto University,
Kyoto 606-8502,
JAPAN }
\email{mukai@kurims.kyoto-u.ac.jp}
\address{Department of Mathematics, 
Faculty of Science and Technology, 
Tokyo University of Science, 
2641 Yamazaki, Noda, 
Chiba 278-8510, JAPAN}
\email{ohashi@ma.noda.tus.ac.jp, ohashi.hisanori@gmail.com}
\thanks{Supported in part by 
the JSPS Grant-in-Aid for Scientific Research (B) 22340007, (S) 19104001, (S) 22224001, (S)25220701, (A) 22244003, for Exploratory Research 20654004 and for Young Scientists (B) 23740010.}

\subjclass[2000]{14J28, 20F55}

\maketitle

\begin{center}
Dedicated to Prof. Robert Lazarsfeld on his 60th birthday
\end{center}

\begin{abstract} 
Let  $S$  be the (minimal) Enriques surface 
obtained from the symmetric quartic surface
$(\sum_{i<j}x_ix_j)^2=kx_1x_2x_3x_4$  in $\P^3$ with $k\neq 0,4,36$,
by taking quotient of the Cremona action $(x_i) \mapsto (1/x_i)$.
The automorphism group of $S$ is a semi-direct product 
of a free product $\F$ of four involutions and 
the symmetric group $\mathfrak{S}_4$. Up to action of $\F$, there are 
exactly $29$ elliptic pencils on $S$. 
\end{abstract}
\date{\today} 

The automorphism groups of very general  Enriques surfaces, namely those corresponding to very general points in moduli, were computed in Barth-Peters\cite{BP}
as an explicitly described infinite arithmetic group. 
Also many authors \cite{D,BP,N,kondo86} studied 
Enriques surfaces with only finitely many automorphisms. 
The article \cite{BP} also includes an example whose automorphism group is infinite but still virtually abelian group.
In this paper we give a concrete example of an Enriques surface whose 
automorphism group is not virtually abelian.
Moreover, the automorphism group is explicitly described in terms of generators and relations.
See also Remark~\ref{vcd}.

We work over any algebraically closed field whose characteristic is not two.
Let us introduce the quartic surface with parameters $k$ and $l$,
\begin{equation}\label{the net}
\overline{X}\colon \{ s_2^2=ks_4+ls_1s_3 \}\subset \P^3,
\end{equation}
where $s_d$ are the fundamental symmetric polynomials of degree $d$ in the homogeneous 
coordinates $x_1,\dots,x_4$. It is singular at the four coordinate points $(1:0:0:0) , \dots,
(0:0:0:1)$ and has an action of the symmetric 
group $\mathfrak{S}_4$. It also admits the action of the standard Cremona 
transformation
\[\varepsilon \colon (x_1:\cdots:x_4)\mapsto \left(\frac{1}{x_1}:\cdots:\frac{1}{x_4}\right)\]
which commutes with $\mathfrak{S}_4$. After taking the minimal resolution $X$,
the quotient surface $S=X/\varepsilon$ becomes an Enriques surface, whenever $\overline{X}$ avoids the eight fixed points 
$(\pm 1:\pm 1:\pm 1:1)$ of $\varepsilon$. This condition is equivalent to 
$k+16l\neq36, k\neq 4$ and $4l+k\neq 0.$

The projection from one of four coordinate points exhibits $X$ as a double cover of the 
projective plane $\P^2$. The associated covering involution
commutes with $\varepsilon$ and defines an involution of the Enriques surface $S$.
In this way we obtain four involutions  
$\sigma_i \ (i=1,\dots,4)$. 
The action of $\mathfrak{S}_4$ also descends to $S$. Therefore,
by mapping the generators of $C_2^{*4}$ to $\sigma_i$, 
we obtain a group homomorphism 
\begin{equation}\label{the hom}
\mathfrak{S}_4 \ltimes (C_2^{*4})\rightarrow \mathrm{Aut}(S),
\end{equation}
where $\mathfrak{S}_4$  acts on the free product as permutation of the four factors.

In this paper we study the automorphism group and elliptic fibrations of $S$ in the 
case $l=0$. Our main result is as follows.
\begin{introth}\label{MT}{\rm (=Theorem~\ref{mt})}
In the equation \eqref{the net}, let $l=0$ and $k\neq 0,4,36$.
Then \eqref{the hom} is an isomorphism. 
Namely $\mathrm{Aut}(S)$ is isomorphic to the semi-direct product 
of the free product $\F$ of four involutions $\sigma_i\ (i=1,\dots,4)$ and 
the symmetric group $\mathfrak{S}_4$. 
\end{introth}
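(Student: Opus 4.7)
The plan is to combine the standard lattice-theoretic description of $\mathrm{Aut}(S)$ for Enriques surfaces with a direct chamber analysis in $\mathrm{Num}(S)$. Let $L_S := \mathrm{Num}(S)$, an even unimodular lattice of signature $(1,9)$ isomorphic to $U \oplus E_8$, and let $\rho\colon\mathrm{Aut}(S)\to O(L_S)$ be the natural representation. Under the genericity imposed by $k\neq 0,4,36$ the kernel of $\rho$ should be trivial, so the task is to identify its image $A$. By the standard result going back to Nikulin, $A$ equals the subgroup of $O^+(L_S)$ preserving the nef cone of $S$, and $O^+(L_S)=W\rtimes A$ where $W$ is the Weyl group generated by reflections in classes of smooth rational curves on $S$. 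Setting $G:=\rho(\mathfrak{S}_4\ltimes\F)\subseteq A$, the problem splits into proving surjectivity $G=A$ and injectivity of \eqref{the hom}.

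For surjectivity I would first compute explicit matrices for $\mathfrak{S}_4$ and the $\sigma_i$ on a basis of $L_S$ adapted to the geometry: the polarization from $|\mathcal{O}_{\overline{X}}(1)|$, the four nodal classes above the coordinate points, and $(-2)$-classes visible from the four $\P^2$-projections that define the $\sigma_i$. Next, I would enumerate smooth rational curves on $S$ using those projections together with the $\mathfrak{S}_4$-symmetry, and show that their $G$-orbits exhaust the walls of $\mathrm{Nef}(S)$. To rule out stray $(-2)$-curves one would use a Vinberg-style algorithm, or embed $L_S$ into a Borcherds-type Lorentzian lattice and cut out a fundamental polytope. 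The second statement of the theorem (exactly $29$ elliptic pencils up to $\F$) is essentially the combinatorial shadow of this chamber analysis: elliptic pencils correspond to primitive isotropic rays on boundaries of Weyl chambers, and the number $29$ should appear as the count of $G$-orbits of such rays modulo $\F$, providing both a consistency check and an input to the argument.

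Injectivity of \eqref{the hom} reduces to showing that the $\sigma_i$ generate a free product $C_2^{*4}$ inside $\mathrm{Aut}(S)$, since the $\mathfrak{S}_4$-part acts faithfully just by permuting the four projection directions. The natural method is a ping-pong argument on the action on $L_S\otimes\R$: each $\sigma_i$ is the covering involution of a $2\!:\!1$ map $X\to\P^2$ and acts as a specific reflection-like isometry on $L_S$; one produces four open regions in the positive cone (indexed by the projection directions) in ping-pong position, so that any alternating word in the $\sigma_i$ is forced to be nontrivial. The principal obstacle in the whole program is the enumeration step inside surjectivity, namely establishing a complete, finite list of $(-2)$-curves on $S$ up to $G$ and verifying that no unexpected wall of $\mathrm{Nef}(S)$ arises from a hidden rational curve; this is exactly where the special hypothesis $l=0$ is presumably decisive, because it restricts the geometry enough to force the chamber structure to be governed by the visible curves from the four projections.
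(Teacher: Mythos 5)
Your overall strategy (reflection-group/chamber analysis on $NS(S)_f$ plus the explicitly constructed $\sigma_i$ and $\mathfrak{S}_4$) is the right one and matches the paper in outline, but two of your foundational assertions are genuine gaps. First, the ``standard result going back to Nikulin'' that the image $A$ of $\mathrm{Aut}(S)$ \emph{equals} the stabilizer of the nef cone in $O^+(L_S)$ is false for Enriques surfaces: for an unnodal Enriques surface the nef cone is the whole positive cone, yet Barth--Peters show $\mathrm{Aut}(S)$ realizes only a proper (2-congruence) subgroup of its stabilizer; and no Torelli-type statement is available here since the paper works over any algebraically closed field of characteristic $\neq 2$. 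Only the trivial containment $A\subseteq \mathrm{Stab}(\mathrm{Nef})$ can be used; your argument survives only because the subgroup you construct turns out to exhaust $\mathrm{Stab}(\mathrm{Nef})$, so the sandwich $G\subseteq A\subseteq \mathrm{Stab}(\mathrm{Nef})=G$ closes --- but that requires proving $\mathrm{Stab}(\mathrm{Nef})=G$, not citing it. (The paper instead bounds $A$ from above geometrically: it classifies $(-2)$-curves and elliptic pencils up to $\F$, shows any $\tau$ can be normalized to fix the polarization $H$ and hence acts as a diagram symmetry, and then invokes Proposition~\ref{NT}.) Second, triviality of $\ker\rho$ is not a genericity statement one can wave at: numerically trivial automorphisms of Enriques surfaces genuinely exist (Mukai--Namikawa, Dolgachev), and the paper must argue it by hand, pinning down enough pointwise-fixed rational curves in the $10A$ configuration and using tameness.

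The step you correctly flag as the principal obstacle --- closing up the list of $(-2)$-curves --- is resolved in the paper by an idea absent from your sketch: each projection involution $\sigma_i$ acts on $NS(S)_f$ as the reflection in an explicit \emph{non-effective} $(-2)$-class $G_i$ (Proposition~\ref{NR}, Corollary~\ref{G_i}), and it is the twenty roots $\{E_i,E_{ij},F_{ij},G_i\}$ --- sixteen curves plus these four reflection centers --- that cut out a finite-volume Vinberg polytope. Without adjoining the $G_i$ the polytope of visible curves does not have finite volume and no enumeration terminates; with them, Vinberg's even-intersection lemma splits $W(10A+6B+4C)=W(4C)\ltimes\overline{N}(W(10A+6B))$, a minimal-$H$-degree argument shows every $(-2)$-curve is $\F$-equivalent to one of the sixteen (Theorem~\ref{rat}), and the $29$ elliptic pencils arise as the maximal parabolic subdiagrams (Lemma~\ref{maximal}), several of which involve the non-effective $G_i$ and require the half-fiber identity $E+\sigma(E)\sim 2(E+G)$ to be realized geometrically. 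Your ping-pong argument for the free product is fine in spirit, but it is cleaner to note that $\F$ surjects onto the Coxeter group $W(4C)$, which is $C_2^{*4}$ because $(G_i,G_j)=2$ forces all pairwise products to have infinite order.
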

In the proof of this theorem, we also obtain the following results on elliptic pencils and 
smooth rational curves. Let $S$ be as in Theorem \ref{MT}.
\begin{introth}\label{elliptic} {\rm (= Theorem~\ref{ellip})}
Up to the action of the free product $\F\simeq C_2^{*4}$, there are exactly 
$29$ elliptic pencils on $S$. They are classified into five types and the
main properties are as in the following table. 

\begin{longtable}{c|c|c|c}
    & singular fibers & Mordell-Weil rank & number \\ \hline
$1)$ & $\tilde{E_7}+\tilde{A_1}$ & $0$ & $12$ \\
$2)$ & $\tilde{E_6}+\tilde{A_2}$ & $0$ & $4$\\
$3)$ & $\tilde{D_6}+\tilde{A_1}$ & $1$ & $6$\\
$4)$ & $\tilde{A_7}+\tilde{A_1}$ & $0$ & $3$\\
$5)$ & $2\tilde{A_5}+\tilde{A_2}+\tilde{A_1}$ & $0$ & $4$ 
\end{longtable}\noindent
Here $2\tilde{A_5}$ denotes the multiple fiber and
the Mordell-Weil rank stands for that of 
its Jacobian fibration. 
\end{introth}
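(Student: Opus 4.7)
The plan is to combine explicit geometric constructions of elliptic fibrations on the quartic model $\overline{X}$ with the lattice-theoretic description of $\mathrm{Aut}(S)$ coming from Theorem~\ref{MT}. The goal is to produce a list of $29$ pencils by geometric means, identify each with a Kodaira type via lattice computations, and then use the known automorphism group to show the list is exhaustive up to $\F$.

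First, I would exhibit at least one elliptic pencil of each of the five types by geometric means on the K3 cover $X$. For instance, pencils of hyperplane sections cut by planes containing an edge of the coordinate tetrahedron, pencils residual to conics on $\overline{X}$, and pencils generated by curves fixed by particular subgroups of $\mathfrak{S}_4$ all yield candidate elliptic fibrations after passage to $S=X/\langle\varepsilon\rangle$. Because the $\mathfrak{S}_4$-action is manifest on $\P^3$, the stabilizer of each candidate pencil is directly computable; the five types should have stabilizers of orders $2,6,4,8,6$, matching the $\mathfrak{S}_4$-orbit sizes $12,4,6,3,4$ recorded in the table.

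Next, for each candidate half-pencil $F$ I would identify the reducible fibers by listing the $(-2)$-curves $C$ on $S$ with $F\cdot C=0$. These come from the exceptional divisors over the four nodes of $\overline{X}$ and from strict transforms of lines, conics and plane sections visible on the quartic. Collecting them into connected components of the dual graph reproduces the Kodaira types in the table. The Mordell--Weil rank of the Jacobian fibration is then read off from the Shioda--Tate formula applied to the orthogonal complement of $\Z F$ inside $\mathrm{Num}(S)\simeq U\oplus E_8(-1)$; the remaining entries $0,0,1,0,0$ are forced by this computation. Particular care is needed in case~$5)$, where the fiber $2\tilde{A}_5$ is the multiple fiber of the pencil.

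Finally, to show these $29$ pencils exhaust the $\F$-orbits, I would use the identification $\mathrm{Aut}(S)\simeq\mathfrak{S}_4\ltimes\F$ of Theorem~\ref{MT} together with a fundamental domain for $\mathrm{Aut}(S)$ on the nef cone of $S$. The proof of Theorem~\ref{MT} produces such a domain, bounded by the mirrors of the four generating involutions $\sigma_i$ and cut out by $\mathfrak{S}_4$-symmetric walls; primitive nef isotropic rays on its boundary are in bijection with $\mathrm{Aut}(S)$-orbits of elliptic pencils. I would enumerate these rays, check that there are exactly five, match them to the five types above, and multiply by the respective $\mathfrak{S}_4$-orbit sizes to recover~$29$. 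The main obstacle lies precisely here: one must verify that every primitive isotropic nef class is $\mathrm{Aut}(S)$-equivalent to a vector on the finite list of cusps of the fundamental chamber, with no extra orbit hidden in the interaction between $\F$ and $\mathfrak{S}_4$. Reconciling the a priori geometric list (where the $\mathfrak{S}_4$-symmetry is visible) with the lattice-theoretic enumeration of cusps, and confirming that exactly the five types survive, is where the bulk of the work sits.
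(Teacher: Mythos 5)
Your overall strategy---realize the pencils geometrically, then show exhaustiveness by identifying $\F$-orbits (or $\mathrm{Aut}(S)$-orbits) of isotropic nef classes with cusps of a finite-volume fundamental chamber---is the right one, and is close in spirit to what the paper does. But the proposal has two genuine problems. First, a circularity: you propose to prove exhaustiveness ``using the identification $\mathrm{Aut}(S)\simeq\mathfrak{S}_4\ltimes\F$ of Theorem~\ref{MT},'' but in the paper that isomorphism is \emph{deduced from} the classification of elliptic pencils (the proof of Theorem~\ref{mt} normalizes an arbitrary automorphism by sending the type~4) pencil $H-E_{12}-E_{34}$ to itself, which requires already knowing there are only three such pencils up to $\F$). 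The enumeration of pencils must therefore be carried out using only $\F$ and the sixteen known rational curves, not the full automorphism group. Second, the step you yourself flag as ``the main obstacle''---that every primitive isotropic nef class can be moved into the fundamental chamber---is precisely the content of the proof, and you leave it unresolved. The paper's resolution is short but essential: pick $f_0$ in the $\F$-orbit minimizing $(f_0,H)$ for the nef class $H=\sum E_i+\sum E_{ij}$; minimality forces $(f_0,G_i)\ge 0$ for the four reflection centers, while nefness gives $(f_0,E)\ge 0$ on the sixteen curves of $10A+6B$, so $f_0$ lies in the Vinberg polyhedron $P^c$; the finiteness of the cusp list then comes from checking Vinberg's finite-volume criterion and classifying the $29$ maximal ($\mathrm{rank}$-$8$) parabolic subdiagrams of the $10A+6B+4C$ Coxeter diagram (Lemma~\ref{maximal}). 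Without some version of this argument your enumeration of ``rays on the boundary'' has no finiteness or completeness statement to rest on.

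A further point of care that your fiber-identification step glosses over: three of the five cusp types involve vertices from the $4C$ configuration, and the classes $G_i$ are \emph{not} effective (Corollary~\ref{G_i}), so the parabolic subdiagram does not literally coincide with the singular-fiber configuration. In types 1) and 5) the $\tilde{A_1}$ component $E+G$ corresponds to the non-multiple fiber $E+\sigma(E)\sim 2(E+G)$; in type 3) the subdiagram is $\tilde{D_6}+\tilde{A_1}+\tilde{A_1}$ but one $\tilde{A_1}$ consists of two non-effective classes $G,G'$, and $\sigma\sigma'$ is a translation by a Mordell--Weil generator, which is how the rank $1$ in the table arises. Your plan to read fibers off from actual $(-2)$-curves orthogonal to the half-fiber would land on the correct answer, but only if this discrepancy between the lattice data and the effective curves is explicitly handled; as written, applying Shioda--Tate to the ``orthogonal complement'' without distinguishing effective roots from the $G_i$ would give the wrong fiber types and ranks in cases 1), 3) and 5).
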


\begin{introth}\label{rational} {\rm (= Theorem~\ref{rat})}
Up to the action of the free product 
$\F\simeq C_2^{*4}$, there are exactly sixteen smooth rational curves on $S$.
They are represented by the curves in the configuration $10A+6B$ (see below).
\end{introth}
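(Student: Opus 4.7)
The plan is to derive Theorem~\ref{rational} from the classification of elliptic pencils in Theorem~\ref{elliptic} together with the description of $\Aut(S)$ in Theorem~\ref{MT}.

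First, I would exhibit the sixteen curves in the configuration $10A + 6B$ concretely, most naturally as images in $S$ of certain lines on the quartic $\bar X$ and as components of reducible fibers of the twelve type-$1$ pencils from Theorem~\ref{elliptic}. I then verify that these curves are smooth rational and pairwise inequivalent modulo $\F$, by computing their classes in $\mathrm{Num}(S)$ and using the explicit formulas for the action of each $\sigma_i$ obtained in the proof of Theorem~\ref{MT}; the $\mathfrak{S}_4$-symmetry cuts this down to a small number of cases.

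Second, let $R$ be an arbitrary smooth rational curve on $S$, so that $R^2 = -2$. Consider the nonnegative integer $m = \min_F R \cdot F$, where $F$ ranges over the $29$ half-pencils of Theorem~\ref{elliptic}. If $m = 0$, then $R$ is an irreducible component of a reducible fiber and, by direct inspection of the five singular-fiber types in the table, lies in the $\F$-orbit of one of the sixteen listed curves. If $m = 1$, then $R$ is a section of some $|2F|$, and the small Mordell-Weil ranks ($0$ or $1$) allow one to translate $R$ by a suitable element of $\F$, combined with the $\mathfrak{S}_4$-action, into one of the listed curves.

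The principal obstacle is ruling out smooth rational curves with $R \cdot F \geq 2$ for every half-pencil $F$. Here I would argue via lattice theory on $\mathrm{Num}(S)$: the 29 pencil classes span a sublattice of large rank, and, combined with the fundamental-chamber description of the $\F$-action on $\mathrm{Num}(S)$ obtained in the proof of Theorem~\ref{MT}, a $-2$-class pairing at least $2$ with every pencil class must either contradict the Hodge index theorem applied to a carefully chosen isotropic combination, or force the class to lie in the interior of a chamber (where no $-2$-curve can sit). Executing this final step, and then matching the resulting finite list of candidates against the sixteen representatives, is the most delicate part; once it is in place, the remaining verifications reduce to direct enumeration using Theorem~\ref{elliptic}.
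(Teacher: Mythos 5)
There is a genuine gap, and it sits exactly where you locate it: the exclusion of curves $R$ with $R\cdot F\ge 2$ for every half-pencil $F$ (and, relatedly, the $m=1$ case) is not a ``delicate final step'' but the entire content of the theorem, and the strategy you sketch for it does not work as stated. The $29$ half-pencil classes are isotropic vectors on the boundary of the positive cone; the set of classes pairing nonnegatively with finitely many such vectors is a polyhedral cone that strictly contains the round light cone, so there do exist $(-2)$-classes in $NS(S)_f$ pairing $\ge 2$ with all of them, and the Hodge index theorem yields no contradiction. Likewise, ``lying in the interior of a chamber'' for the $\F$-action is not an obstruction to being a $(-2)$-curve: the chambers relevant here are those of the full reflection group $W(10A+6B+4C)$, not of $\F\simeq W(4C)$, and the correct statement is not about where curves can sit but that \emph{no class of negative square at all} lies in the closed polyhedron $P^c$. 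Your $m=1$ case is also underdetermined: $R\cdot F=1$ makes $R$ a bisection of $|2F|$, the Mordell--Weil rank is $0$ for four of the five types so translation gives you nothing, and $\F$ does not act by Mordell--Weil translations except in type 3).

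The missing idea is the paper's use of the twenty roots $\{E_i,E_{ij},F_{ij},G_i\}$ (the sixteen curves \emph{together with} the non-effective centers $G_i$ of the $\sigma_i$) as the walls of a finite-volume Vinberg polyhedron $P^c\subset\overline{\Lambda}$. One minimizes $(E_0,H)$ over the orbit $\F.E$ for the nef and big class $H=\sum E_i+\sum E_{ij}$; minimality forces $(E_0,G_i)\ge 0$ for all $i$, and if in addition $(E_0,E)\ge 0$ for all sixteen curves then $\R_+E_0\in P^c\subset\overline{\Lambda}$, whence $E_0^2\ge 0$ --- a contradiction. So $E_0$ meets one of the sixteen curves negatively and therefore equals it. This bypasses elliptic pencils entirely (in the paper the logical order is in fact reversed: the inequivalence of the $29$ pencils under $\F$ is deduced \emph{from} Theorem~\ref{rat}). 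Your inequivalence step is fine in spirit; the paper does it more cheaply by noting $(E_{ij},G_k)=0$, so the sextuple $((E_{ij},E))_{i<j}$ is an $\F$-orbit invariant separating the sixteen curves. Finally, be careful that invoking ``the action of $\sigma_i$ from the proof of Theorem~\ref{MT}'' risks circularity; what you actually need is Proposition~\ref{NR}, which identifies $\sigma_i$ as the reflection in $G_i$ independently of the main theorem.
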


The proof of Theorem \ref{MT} uses some sixteen smooth rational curves on $S$ and 
the fact that four involutions $\sigma_1,\dots,\sigma_4$ are numerically reflective.
First using the four singularities of type $D_4$ and four tropes on $\overline{X}$, we find
ten smooth rational curves on $S$ with the dual graph as in Figure \ref{10A} 
(Section \ref{config}).
We call it the $10A$ configuration.

\begin{figure}
\centering
\begin{picture}(80,90)
\linethickness{0.7pt}
\put(0,0){\circle*{6}}
\put(-16,3){$E_2$}
\put(30,0){\circle*{6}}
\put(60,0){\circle*{6}}
\put(64,-6){$E_3$}
\put(30,52){\circle*{6}}
\put(28,56){$E_1$}
\put(15,26){\circle*{6}}
\put(-1,30){$E_{12}$}
\put(45,26){\circle*{6}}
\put(72,24){\circle*{6}}
\put(77,24){$E_{4}$}
\put(66,12){\circle*{6}}
\put(51,38){\circle*{6}}
\put(55,38){$E_{14}$}
\put(36,12){\circle*{6}}
\put(0,0){\line(1,0){60}}
\put(60,0){\line(-15,26){30}}
\put(30,52){\line(-15,-26){30}}
\put(30,52){\line(3,-2){40}}
\put(72,24){\line(-1,-2){12}}
\multiput(0,0)(6,2){12}{\line(3,1){3}}
\end{picture}
\caption{The $10A$ configuration.}
\label{10A}
\end{figure}
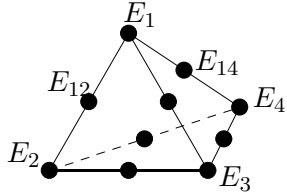

Also by looking at some other plane sections, we find further six smooth rational curves
on $S$ with the dual graph as in Figure \ref{6B}. This is called the $6B$ configuration.

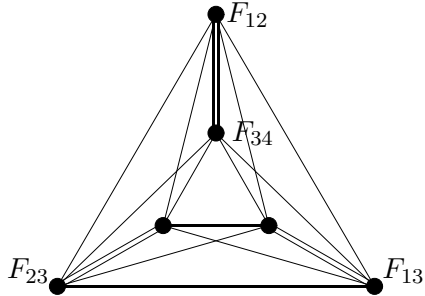
\begin{figure}
\centering
\begin{picture}(140,120)
\linethickness{0.7pt}
\put(10,0){\circle*{6}}	\put(-9,3){$F_{23}$}
\put(130,0){\circle*{6}}	\put(133,3){$F_{13}$}
\put(70,103){\circle*{6}}\put(74,100){$F_{12}$}
\put(50,23){\circle*{6}}
\put(90,23){\circle*{6}}
\put(70,58){\circle*{6}}	\put(76,55){$F_{34}$}
\put(10,0){\line(1,0){120}}
\put(10,0){\line(60,103){60}}
\put(130,0){\line(-60,103){60}}
\put(50,23){\line(1,0){40}}
\put(50,23){\line(20,35){20}}
\put(90,23){\line(-20,35){20}}
\put(10,0){\line(60,58){60}}
\put(10,0){\line(80,23){80}}
\put(130,0){\line(-60,58){60}}
\put(130,0){\line(-80,23){80}}
\put(70,103){\line(-20,-80){20}}
\put(70,103){\line(20,-80){20}}
\put(69,103){\line(0,-1){45}}
\put(71,103){\line(0,-1){45}}
\put(10.5,-0.85){\line(40,23){40}}
\put(9.5,0.85){\line(40,23){40}}
\put(129.5,-0.85){\line(-40,23){40}}
\put(130.5,0.85){\line(-40,23){40}}
\end{picture}
\caption{The $6B$ configuration}
\label{6B}
\end{figure}

We denote by $NS(S)_f$ the N\'{e}ron-Severi lattice of $S$ modulo torsion.
The action of involutions $\sigma_1,\dots,\sigma_4$ on $NS(S)_f$ 
is the reflection in $(-2)$ classes $G_1,\dots,G_4\in NS(S)_f$, respectively. 
For instance, the class $G_1$ is $E_2+E_{23}+E_3+E_{34}+E_4+E_{24}-E_1$ in terms of Figure \ref{10A} (Proposition~\ref{NR}).  
The dual graph of these four $(-2)$ classes 
is the complete graph in four vertices with doubled edges. 
This is what is called the $4C$ configuration.

We can check that the twenty $(-2)$-classes $E_i, E_{ij}, F_{ij}, G_i$ 
define a convex polyhedron whose Coxeter diagram satisfies the Vinberg's condition 
\cite{some}. Namely the subgroup $W(10A+6B+4C)$ 
generated by reflections in these twenty classes
has finite index in the orthogonal group $O(NS(S)_f)$.
In fact, the limit of our Enriques surfaces as  $k \to \infty$  is of type V in Kondo\cite{kondo86} (see Remark~\ref{limit}), and our diagram coincides with Kondo's.
Although in his case the classes $G_1,\dots,G_4$ were also represented by 
smooth rational curves, in our case they appear just as the {\em{center}} of the reflective 
involutions $\sigma_1,\dots,\sigma_4$ and are not effective (Corollary \ref{G_i}).

To prove our Theorem \ref{MT}, we divide the generators of $W(10A+6B+4C)$ into two parts,
those coming from $10A+6B$ and those from $4C$. 
By a lemma of Vinberg\cite{Vinberg}, $W(10A+6B+4C)$ is the semi-direct product
$W(4C) \ltimes \overline{N}(W(10A+6B))$, where $\overline{N}$ denotes the normal closure.
Since the whole $10A+6B+4C$ configuration has only $\mathfrak{S}_4$-symmetry, we obtain our Theorem~\ref{MT} and the others (Sections \ref{proof1}). 

\begin{intrormk} There are some interesting cases in $l\neq 0$, too.

(1) When $(k-4)(l-4)=16$, the surface $\overline{X}$ is 
Kummer's quartic surface $\mathrm{Km}(J(C))$ written in Hutchinson's form. It has $16$ nodes.
Our four involutions $\sigma_i$ are called {\em{projections}}. As is shown in 
\cite{mukai-ref}, $S$ is an Enriques surface of Hutchinson-G\"{o}pel type and 
the four involutions are numerically reflective.
Especially in the case $(k,l)=(-4,2)$, the hyperelliptic curve $C$ branches over 
the vertices of regular octahedron and the equation of $\overline{X}$ becomes 
\[(x_1^2x_2^2+x_3^2x_4^2)+(x_1^2x_3^2+x_2^2x_4^2)+(x_1^2x_4^2+x_2^2x_3^2)+2x_1x_2x_3x_4=0.\]
This is the case of the octahedral Enriques surface \cite{Fields} and $S$ is isomorphic to the 
normalization of the singular sextic surface 
\[x_1^2+x_2^2+x_3^2+x_4^2+\sqrt{-1}\left(\frac{1}{x_1^2}+\frac{1}{x_2^2}+\frac{1}{x_3^2}+\frac{1}{x_4^2}\right)x_1x_2x_3x_4=0.\]
In these cases we know that there exist automorphisms on $S$ induced from 
$X$ other than projections, namely some switches and correlations. 
The automorhism group of the octahedral Enriques surface will be discussed elsewhere.

(2)  The quartic surface $\overline{X}: ks_4+ls_1s_3=0$ is the Hessian of the 
cubic surface 
\[k(x_1^3+x_2^3+x_3^3+x_4^3)+l(x_1+x_2+x_3+x_4)^3=0.\]
The case $(k:l)=(1:-1)$ is most symmetric among this one-parameter family.
In this special case, the Enriques surface $S = X/\varepsilon$  is of type VI in Kondo\cite{kondo86} and
the automorphism group is isomorphic to $\mathfrak{S}_5$. 
In particular, the homomorphism \eqref{the hom} is neither injective nor surjective.
\end{intrormk}

\begin{intrormk}\label{vcd}
In terms of virtual cohomological dimensions of discrete groups \cite{Serre}, 
our example can be located in the following way. The virtual cohomological dimension
is equal to $0$ for finite groups. On the other extreme,
the discrete group $\mathrm{Aut} (S)$ for very general Enriques surfaces $S$ has 
the virtual cohomological dimension $8$. See \cite{BS}. In our case, the automorphism group 
has virtual cohomological dimension $1$.
\end{intrormk}

\section{Smooth rational curves}\label{config}

Under the condition $l=0$, the equation \eqref{the net} becomes
\begin{equation}\label{l=0}
\overline{X}\colon (x_1x_2+x_1x_3+x_1x_4+x_2x_3+x_2x_4+x_3x_4)^2=kx_1x_2x_3x_4.
\end{equation}
This surface has four rational double points of type $D_4$ at the four 
coordinate points $(1:0:0:0),\dots,(0:0:0:1)$ and by taking 
the quotient of the minimal resolution $X$ by the standard Cremona 
involution 
\[ \varepsilon\colon  (x_1:\cdots:x_4)\mapsto \left(\frac{1}{x_1}:\cdots:\frac{1}{x_4}\right),\]
we obtain an Enriques surface $S=X/\varepsilon$.
We begin with the study of the configuration of smooth rational curves 
on the surfaces.

The desingularization $X$ has sixteen smooth rational curves as the exceptional curves of the 
four $D_4$ singularities.
Also each coordinate plane cuts the quartic doubly along a conic, 
which defines a smooth rational curve on $X$. They are called {\em{tropes}}.
The configuration of these twenty curves is as in
Figure \ref{4D4}, which depicts the dual graph. Black vertices come from the singularities 
and white ones are tropes. 

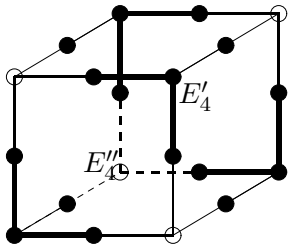
\begin{figure}
\centering
\begin{picture}(120,90)
\linethickness{0.7pt}
\put(0,0){\line(1,0){60}}
\put(60,0){\line(0,1){60}}
\put(60,60){\line(-1,0){60}}
\put(0,60){\line(0,-1){60}}
\put(0,60){\line(5,3){40}}
\put(40,84){\line(1,0){60}}
\put(100,84){\line(-5,-3){40}}
\put(100,84){\line(0,-1){60}}
\put(100,24){\line(-5,-3){40}}
\multiput(0,0)(4,2.4){10}{\line(5,3){2}}
\multiput(40,24)(0,6){10}{\line(0,1){3}}
\multiput(40,24)(6,0){10}{\line(1,0){3}}
\put(0,60){\circle{6}}
\put(60,0){\circle{6}}
\put(40,24){\circle{6}}
\put(26,24){$E_4''$}
\put(100,84){\circle{6}}
\linethickness{1.7pt}
\put(0,0){\circle*{6}}
\put(30,0){\circle*{6}}
\put(0,30){\circle*{6}}
\put(20,12){\circle*{6}}
\put(0,0){\line(0,1){30}}
\put(0,0){\line(1,0){30}}
\put(0,0){\line(5,3){20}}
\put(60,60){\circle*{6}}
\put(62,50){$E_4'$}
\put(60,30){\circle*{6}}
\put(30,60){\circle*{6}}
\put(80,72){\circle*{6}}
\put(60,60){\line(0,-1){30}}
\put(60,60){\line(-1,0){30}}
\put(60,60){\line(5,3){20}}
\put(40,84){\circle*{6}}
\put(40,54){\circle*{6}}
\put(70,84){\circle*{6}}
\put(20,72){\circle*{6}}
\put(40,84){\line(0,-1){30}}
\put(40,84){\line(1,0){30}}
\put(40,84){\line(-5,-3){20}}
\put(100,24){\circle*{6}}
\put(100,54){\circle*{6}}
\put(70,24){\circle*{6}}
\put(80,12){\circle*{6}}
\put(100,24){\line(0,1){30}}
\put(100,24){\line(-1,0){30}}
\put(100,24){\line(-5,-3){20}}

\end{picture}
\caption{The quartic surface with four $D_4$ singularities.}
\label{4D4}
\end{figure}

The standard Cremona involution $\varepsilon$ acts on Figure \ref{4D4}
by the point symmetry.
Therefore the Enriques surface $S$ has 
ten smooth rational curves whose dual graph 
is the one in Figure \ref{10A}.
In what follows, we call these ten curves on $S$ the $10A$ configuration. 
The indexing is given as follows. 
Since a 
vertex of the tetrahedron corresponds to two curves on $X$, namely the trope $\{x_i=0\}$
and the central component of the exceptional curves at $(0:\cdots :1: \cdots :0)$ (the $i$-th 
coordinate is $1$), 
we denote the curve at the vertex by $E_i$ $(i=1,\dots,4)$. 
Also if a vertex at the middle of an edge is connected to
two vertices, say $E_i$ and $E_j$, then we denote the curve by $E_{ij}$. This is the first 
configuration of smooth rational curves on $S$ of our interest.
It is convenient to note that the ten curves $\{E_i,E_{ij}\}$ generate $NS(S)_f$ over the rationals;
the Gram matrix of these curves has determinant $-64$.\\

Next let us consider the six plane sections by $\{x_i+x_j=0\}$ $(i=1,\dots,4)$. 
In the equation \eqref{l=0}, we see that each plane section decomposes into two conics which
are disjoint on $X$ and exchanged by $\varepsilon$. 
Thus we obtain further six smooth rational curves on $S$, naturally indexed as $F_{ij}$.
The intersection relation between these curves is as in Figure \ref{6B}. We call it 
the $6B$ configuration.
Moreover,
we can clarify the intersection relations between the configurations as follows.
\begin{equation*}
(E_k,F_{ij})=0;\qquad
(E_{kl},F_{ij})=
\begin{cases}
2\ \text{if $\{k,l\} =\{i,j\}$},\\
0\ \text{otherwise}.
\end{cases}
\end{equation*}
The configuration of sixteen curves thus obtained is denoted by $10A+6B$.

\section{Numerically reflective involutions}\label{involutions}

The quartic surface \eqref{l=0} 
can be exhibited as a double cover of $\P^2$ 
by the projection from one of the coordinate points, say $(0:0:0:1)$. 
The branch $B\subset \P^2$ is the sextic plane curve defined by 
\begin{equation}\label{2.2}
x_1x_2x_3\left(4(x_1+x_2+x_3)\left(\frac{1}{x_1}+\frac{1}{x_2}+\frac{1}{x_3}\right)x_1x_2x_3-kx_1x_2x_3\right) =0.
\end{equation}
It is the union of the coordinate triangle $\{x_1x_2x_3=0\}$
and the cubic curve 
\begin{equation}\label{C}
C\colon 4(x_1+x_2+x_3)\left(\frac{1}{x_1}+\frac{1}{x_2}+\frac{1}{x_3}\right)-k=0,
\end{equation}
which is invariant under the Cremona transformation $(x_i)\mapsto (1/x_i)$ of $\P^2$.
See Figure \ref{branch}.
In this double plane picture, the twenty rational curves in Figure \ref{4D4} 
can be seen as 
the twelve rational curves above the three triple points of $B$, three rational curves above 
the three 
nodes of $B$, three tropes as the inverse image of the coordinate triangle and some components of 
inverse images of the curves $L\colon \{x_1+x_2+x_3=0\}$ and 
$Q\colon \{\frac{1}{x_1}+\frac{1}{x_2}+\frac{1}{x_3}=0\}.$ (We note that
the line $L$ must pass through the three simple intersection points of $C$ with 
the triangle in Figure \ref{branch}, although it is not visible.)

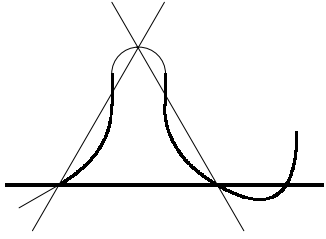
\begin{figure}
\centering
\begin{picture}(140,100)
\linethickness{1.2pt}
\put(0,20){\line(1,0){120}}
\put(10,2.7){\line(30,52){50}}
\put(90,2.7){\line(-30,52){50}}
\linethickness{0.7pt}
\put(5,11.35){\line(45,26){15}}
\qbezier(20,20)(40,31.6)(40,50)
\put(50,50){\oval(20,44)[t]}
\qbezier(60,50)(60,31.6)(80,20)
\qbezier(80,20)(110,2.7)(110,40)
\end{picture}
\caption{The branch sextic $B$}
\label{branch}
\end{figure}

The covering transformation of this double cover $X\rightarrow \P^2$ is called the 
{\em{projection}}. It is an anti-symplectic involution acting on $X$.
It stabilizes all the curves above the branch curve $B$ (including the ones above the
singularities of $B$).
In particular, in Figure \ref{4D4}, if $E_4''$ is the trope $\{x_4=0\}$,  
then the projection stabilizes all the curves 
except for $E_4''$ and its antipodal $E_4'$ (coming from the singularity at $(0:0:0:1)$).
It is easy to determine the fixed curves of the projection and
it consists of 
six rational curves (vertices of the cube except for $E_4'$ and $E_4''$) and 
the inverse image of the elliptic curve $C$.

Since the projection commutes with the Cremona involution of $\P^3$, we obtain an 
involution of the Enriques surface $S$. It is denoted by $\sigma_4$, where the index
is in accordance to the center of the projection $(0:0:0:1)$.
\begin{prop}\label{NR}
The involution $\sigma_4\in \mathrm{Aut}(S)$ is numerically reflective.
Moreover, its action on the N\'{e}ron-Severi lattice $NS(S)_f$ 
is the reflection in the divisor $G_4=E_1+E_{12}+E_2+E_{23}+E_3+E_{13}-E_4$ of self-intersection
$(-2)$. 
In Figure \ref{10A}, the six positive components in $G_4$ are just the 
cycle of curves disjoint from $E_4$.
\end{prop}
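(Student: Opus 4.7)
The plan is to reduce the statement to showing that $\sigma_4^\ast$ fixes a rank-$9$ sublattice of $NS(S)_f\otimes\Q$ whose orthogonal complement is $\Q G_4$, and then to exclude the possibility that $\sigma_4^\ast$ is the identity.

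First I would verify by direct computation on the $10A$-configuration (Figure~\ref{10A}) that $G_4^2=-2$ and $G_4\cdot v=0$ for every $v$ in the nine-class family $\mathcal F=\{E_1,E_2,E_3\}\cup\{E_{ij}\}_{1\le i<j\le 4}$; here the key substructure is the hexagonal cycle $E_1\!-\!E_{12}\!-\!E_2\!-\!E_{23}\!-\!E_3\!-\!E_{13}$ disjoint from $E_4$. Since the Gram matrix of the ten classes $\{E_i,E_{ij}\}$ has non-zero determinant, $\mathcal F$ spans a rank-$9$ sublattice whose orthogonal complement in $NS(S)_f\otimes\Q$ is exactly $\Q G_4$.

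Next I would invoke the geometric description of the projection involution $\iota_4$ preceding the proposition: it stabilises (as sets) eighteen of the twenty curves in Figure~\ref{4D4}, missing only $E_4'$ and $E_4''$. Six of them (the tropes $E_1'',E_2'',E_3''$ and the central $D_4$-components $E_1',E_2',E_3'$) are pointwise fixed because they lie over the branch locus $B$; the remaining twelve, the outer $D_4$-curves, are each $\iota_4$-stable. Passing through the Enriques quotient $X\to S$, these eighteen curves descend in $\varepsilon$-pairs to the nine smooth rational curves $E_1,E_2,E_3$ and $\{E_{ij}\}$, so $\sigma_4^\ast$ fixes every class in $\mathcal F$. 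Combined with the previous step, $\sigma_4^\ast$ acts on $NS(S)_f$ as either the identity or the reflection $s_{G_4}$.

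The crux is to exclude the identity. I would pick any of the three outer $D_4$-curves $O$ at $(0{:}0{:}0{:}1)$; since $O$ is $\iota_4$-stable, one has $O\cdot\iota_4(E_4')=O\cdot E_4'=1$, while $O\cdot E_4''=0$ because $E_4''$ avoids the $D_4$-exceptional over $(0{:}0{:}0{:}1)$. Together with $\iota_4(E_4')\neq E_4'$, this shows that the image of $\iota_4(E_4')$ in $S$ is a smooth rational $(-2)$-curve distinct from $E_4$. A short cohomology argument using the exact sequence $0\to\mathcal O_S(K_S)\to\mathcal O_S(K_S+E_4)\to\mathcal O_{E_4}(-2)\to 0$ together with $p_g(S)=0$ gives $h^0(K_S+E_4)=0$, so $E_4$ is the unique effective divisor in its numerical class. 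Hence $\sigma_4^\ast(E_4)\neq E_4$, forcing $\sigma_4^\ast=s_{G_4}$. The final sentence of the proposition is then read off directly from Figure~\ref{10A}.
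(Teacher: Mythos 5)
Your argument is correct, but it reaches the conclusion by a genuinely different route from the paper. You share the first step (the projection stabilizes the eighteen curves of Figure \ref{4D4} other than $E_4'$, $E_4''$, so $\sigma_4$ fixes the nine classes $E_1,E_2,E_3,E_{ij}$), and your orthogonality computation correctly identifies $\Q G_4$ as the orthogonal complement of their rank-$9$ span, so the dichotomy ``identity or $s_{G_4}$'' is valid. Where you diverge is in ruling out the identity: you show only that $\sigma_4(E_4)\neq E_4$ as a curve, via the intersection numbers of $\iota_4(E_4')$ with an outer $D_4$-component over $(0{:}0{:}0{:}1)$, and then upgrade this to numerical inequivalence using $h^0(E_4)=1$ and $h^0(K_S+E_4)=0$ --- all of which checks out. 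The paper instead computes $\sigma_4(E_4)$ explicitly: it exhibits the elliptic pencil $|2D_f|$ with half-fiber $D_f=E_1+E_{12}+E_2+E_{23}+E_3+E_{13}$, identifies the non-multiple fiber through $E_4$ with the pullback of the reducible cubic $C_\infty=L+Q$, and observes that the planar Cremona involution swaps $L$ and $Q$, giving $\sigma_4(E_4)=B=2D_f-E_4$ directly. Your version is more economical and purely lattice-theoretic at the crucial step; what the paper's computation buys is the explicit residual curve $B$ and the fiber $E_4+B$ of type $\tilde A_1$, which are reused later (in the proof of Proposition \ref{NT} and in the construction of the type 1) and 5) pencils in Theorem \ref{ellip}), so if you adopted your proof you would need to extract that fiber description separately.
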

\begin{proof}
From our description of fixed curves of the projection as above, 
we see that $\sigma_4$ preserves all the curves $E_i$ and $E_{ij}$ except for $E_4$.
Compare Figures \ref{10A} and \ref{4D4}.

Consider the elliptic fibration $f\colon S\rightarrow \P^1$ defined by 
the divisor $2D_f=2(E_1+E_{12}+E_2+E_{23}+E_3+E_{13})$.
It gives the multiple fiber of type$\ _2{\rm I}_6$ in Kodaira's notation.
From Figure \ref{10A}, we see that the curve $E_4$ sits inside a reducible fiber
which we denote by $D'$.
In comparison with Figure \ref{branch}, $f$ corresponds to 
the pencil $\mathcal{L}$ 
of cubics on $\P^2$ spanned by the triangle $\{x_1x_2x_3=0\}$ and the cubic 
curve $C$ of \eqref{C}. Thus we see that the multiple fibers of $f$ are exactly the 
transform of the triangle, which is nothing but the divisor $2D_f$ of type$\ _2{\rm I}_6$, 
and the transform of $C$, namely some irreducible fiber of type$\ _2{\rm I}_0$.
On the other hand, the cubic 
\begin{equation}\label{Cinfty}
C_\infty: = L+Q\in \mathcal{L}
\end{equation}
corresponds to the 
reducible fiber of $f$ which contains $E_4$. Therefore the fiber $D'=E_4+B$ is of 
Dynkin type $\tilde{A}_1$ and is not multiple. (More precisely, 
it is of type III in characteristic three and otherwise ${\rm I}_2$.)
Since the Cremona involution of $\P^2$ interchanges $L$ and $Q$,
we see that $\sigma_4$ interchanges $E_4$ and $B$. 

From the linear equivalence 
$E_4+B\sim 2(E_1+E_{12}+E_2+E_{23}+E_3+E_{13})$, we 
see that the action is
\[\sigma_4\colon E_4\mapsto B= 2(E_1+E_{12}+E_2+E_{23}+E_3+E_{13})-E_4.\]
By taking the first paragraph into account, we see that $\sigma_4$ is 
numerically reflective and 
acts on $NS(S)_f$ by the reflection in the divisor
\[G_4=E_1+E_{12}+E_2+E_{23}+E_3+E_{13}-E_4.\]
\end{proof}
By symmetry, we obtain divisors $G_i (i=1,\dots, 4)$ which describe the numerically 
reflective involutions $\sigma_i$ in a similar manner.
We see that $(G_i,G_j)=2$ for $i\neq j$ so that the intersection diagram associated to divisors $G_1,\dots,G_4$ 
is the complete graph in four vertices with all edges doubled. 
In what follows we denote this configuration by $4C$.

We note that the automorphism $\sigma_i$ sends $G_i$ to its negative. It implies the 
following corollary.
\begin{cor}\label{G_i}
The numerical classes of $G_i$ are not effective.
\end{cor}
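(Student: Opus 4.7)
The plan is to argue by contradiction, exploiting the key fact from Proposition~\ref{NR} that $\sigma_i^*$ acts on $NS(S)_f$ as the reflection in $G_i$ and in particular sends $[G_i]$ to $-[G_i]$. Since $\sigma_i$ is a genuine automorphism of the surface $S$ (not merely a lattice isometry), it carries effective classes to effective classes. So if the numerical class $[G_i]$ were effective, then $-[G_i]$ would be effective as well, and the standard positivity argument against a nonzero class being effective together with its negative yields the contradiction.

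More concretely, I would first suppose there exists an effective divisor $D$ on $S$ with $[D] = [G_i]$ in $NS(S)_f$. Applying the automorphism $\sigma_i$, the divisor $\sigma_i^{\ast}D$ is again effective, and by Proposition~\ref{NR} its numerical class is $-[G_i]$. Therefore $D + \sigma_i^{\ast}D$ is an effective divisor that is numerically trivial. Pairing with any ample class $H$ on $S$ gives
\[
0 = (H, D) + (H, \sigma_i^{\ast}D),
\]
and since both summands are non-negative (each factor is an ample class paired with an effective divisor) they must each vanish. But $H$ ample and $D$ effective with $(H,D)=0$ forces $D=0$, hence $[G_i] = 0$ in $NS(S)_f$. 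This contradicts $(G_i,G_i)=-2$, proving that $[G_i]$ cannot be effective. The same argument applies to each $i=1,\dots,4$ by the $\mathfrak{S}_4$-symmetry.

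There is essentially no serious obstacle here; the argument is a short positivity computation whose only subtle point is the passage between linear and numerical equivalence. That point is handled cleanly by phrasing everything in terms of numerical classes from the start: effectivity of a numerical class means the existence of \emph{some} effective representative, the automorphism $\sigma_i$ moves effective representatives to effective representatives, and the induced action on $NS(S)_f$ is the one computed in Proposition~\ref{NR}, so no control over $K_S$-torsion is needed.
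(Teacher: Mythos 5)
Your proposal is correct and is exactly the paper's argument: the authors note that $\sigma_i$ sends $G_i$ to $-G_i$ and deduce non-effectivity from the fact that a nonzero class and its negative cannot both be effective. You have simply written out the standard pairing-with-an-ample-class step that the paper leaves implicit.
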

We can compute the intersections of $G_i$ and the $10A+6B$ configuration.
We have the following.
\begin{equation*}
\begin{split}
(G_i, E_j)=
\begin{cases}
2\quad \text{if $i=j$,}\\
0\quad \text{otherwise},
\end{cases}
(G_i, E_{kl})=0,\quad
(G_i,F_{kl})=
\begin{cases}
2\quad \text{if $i\not\in \{k,l\}$}\\
0\quad \text{if $i\in \{k,l\}$}.
\end{cases}
\end{split}
\end{equation*}

\begin{rmk}\label{limit}
The limit of our quartic surface $\overline{X}$ in \eqref{l=0} as $k \to \infty$ is the double  $\P^2$ with branch the union of the coordinate triangle and the reducible cubic $C_\infty$ in \eqref{Cinfty}.
Hence the limit of our Enriques surfaces is of type V in Kondo\cite{kondo86}.
(See \cite{M10} also.)
In this limit  our divisor class $G_4$  becomes effective and corresponds to the new singular point coming from the intersection $L \cap Q$  in \eqref{Cinfty}.
Thus $G_4$  can be regarded as the vanishing cycle of this specialization $k \to \infty$. 
Furthermore the numerically reflective involution $\sigma_4$ becomes numerically trivial in this limit.
More precisely, the limit of its graph as $k \to \infty$ is the union of that of the limit involution  and the product $C_4 \times C_4$, where  $C_4$  is the unique $(-2)$-curve representing  $G_4$ in the limit Enriques surface.
\end{rmk}

\section{Proof of the Theorems}\label{proof1}

In the previous two sections, we obtained sixteen smooth rational curves with the configuration
$10A+6B$ and four numerically reflective involutions $\sigma_i$ whose 
centers $G_i$ have the configuration $4C$. 
We begin with the consideration of the natural representation $r\colon \mathrm{Aut}(S)\rightarrow O(NS(S)_f)$.

\begin{prop}\label{NT}
The homomorphism $r$ is injective, namely there are no nontrivial 
numerically trivial automorphisms on $S$.
\end{prop}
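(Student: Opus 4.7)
The plan is to show that any $g\in \ker r$ fixes enough points of $S$ to force $g=\id_S$, exploiting the sixteen smooth rational curves of the $10A+6B$ configuration from Section~\ref{config}. First I would verify that $g$ preserves each of the curves $E_i$, $E_{ij}$, and $F_{ij}$ setwise, not merely up to the numerical class modulo $K_S$. Each $E_i$ and each $E_{ij}$ appears as an irreducible component of the half-fiber $D_f$ of the ${}_2\mathrm{I}_6$-fibration of Proposition~\ref{NR} (or of one of its $\mathfrak{S}_4$-translates), so $g$ must permute the components of each such half-fiber; numerical triviality together with the fact that no translate $E_i+K_S$ occurs as such a component then forces $g$ to fix each component individually. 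An analogous argument using the elliptic pencils of Theorem~\ref{elliptic} handles the $F_{ij}$.

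Next I would analyze the action of $g$ on individual curves. In the $10A$ graph each $E_i$ meets exactly three other curves of the configuration---the three $E_{ij}$ with $j\neq i$---in three distinct transverse points, all of which are fixed by $g$. Since an automorphism of $\P^1$ with three distinct fixed points is the identity, $g|_{E_i}=\id$ for $i=1,\ldots,4$. Similarly, $E_{ij}$ carries at least three $g$-fixed points---its transverse intersections with $E_i$, with $E_j$, and with $F_{ij}$---so $g|_{E_{ij}}=\id$ as well.

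Finally, at a transverse intersection point $p=E_1\cap E_{12}$, the differential $dg_p$ fixes both $T_pE_1$ and $T_pE_{12}$ pointwise; as these span $T_pS$, $dg_p=\id$. Since $g$ has finite order (with $\ker r$ known to be finite for Enriques surfaces) coprime to the characteristic, linearization at $p$ shows that $g$ is the identity in a neighborhood of $p$, and by connectedness $g=\id_S$.

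The main obstacle I expect lies in the second step applied to the $E_{ij}$: a priori the two points of $E_{ij}\cap F_{ij}$ (recall $(E_{ij},F_{ij})=2$) might be exchanged by $g$ rather than fixed individually, yielding only two rather than three guaranteed fixed points on $E_{ij}$. I would address this either by verifying that $F_{ij}$ meets $E_{ij}$ tangentially at a single point (which is then automatically $g$-fixed) or by locating a further elliptic pencil from Theorem~\ref{elliptic} in which $E_{ij}$ appears as a non-adjacent component of a reducible fiber, supplying the missing third fixed point.
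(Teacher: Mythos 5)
Your overall strategy is the same as the paper's (force $g$ to fix three distinct points on suitable rational curves, then conclude by tameness/linearization at a point where two fixed curves cross), but the gap you flag at the end is exactly the step the paper has to work for, and your two proposed remedies do not close it. Since $(E_{ij},F_{ij})=2$, the two points of $E_{ij}\cap F_{ij}$ may indeed be interchanged by $g$ (an involution $z\mapsto -z$ of $E_{ij}\simeq\P^1$ fixing $E_{ij}\cap E_i$ and $E_{ij}\cap E_j$ and swapping an orbit of length two is perfectly consistent with everything established so far), and nothing in the paper suggests the intersection is tangential. So as written you only have $g|_{E_i}=\id$ for the four pairwise disjoint vertex curves, which is compatible with a smooth fixed locus and does not yet yield $dg_p=\id$ anywhere. (Incidentally, your first step is more complicated than necessary: an irreducible curve of negative self-intersection is the unique effective curve in its numerical class, since two distinct irreducible curves cannot have negative intersection number; so $g(E)=E$ for every $(-2)$-curve follows at once from numerical triviality, with no need to discuss half-fibers or the ambiguity by $K_S$.)

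The paper supplies the missing third fixed point from outside the $10A+6B$ configuration. For the elliptic fibration $f$ with half-fiber $D_f=E_1+E_{12}+E_2+E_{23}+E_3+E_{13}$, the curve $E_4$ lies in a non-multiple fiber $E_4+B$ of type $\tilde{A_1}$, where $B=\sigma_4(E_4)=2D_f-E_4$ is again a $(-2)$-curve, hence preserved by $g$. The curves $E_{14},E_{24},E_{34}$ are bisections of $f$, so each meets $E_4$ once and $B$ once; since $E_4\cap B$ consists of at most two points while the three points $E_4\cap E_{i4}$ are distinct, some bisection $C_f=E_{i4}$ avoids $E_4\cap B$ and therefore carries three distinct $g$-fixed points $E_4\cap C_f$, $B\cap C_f$ and $E_i\cap C_f$. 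Then $g$ fixes the nodal curve $E_4+C_f$ pointwise, which contradicts smoothness of the fixed locus of a tame finite-order automorphism unless $g=\id_S$. If you want to repair your write-up along your own lines, option (b) of your last paragraph is the right one, but the auxiliary curve must be taken from the $\F$-orbit of the configuration (here $B=2D_f-E_4$), not from $F_{ij}$; you would also need to quote tameness (Dolgachev) to justify the linearization step in positive characteristic, as the paper does.
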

\begin{proof}
Let $g$ be a numerically trivial automorphism of $S$, 
which is tame by virtue of Dolgachev\cite{D12}.
It preserves each $(-2)$-curves, in particular, each in the $10A+6B$ configuration. 
The curves $E_1,\dots,E_4$ in Figure \ref{10A} must be pointwise fixed,
since $\mathrm{Aut}(\P^1)$ is sharply triply transitive and 
since each $E_i$  has three distinct intersections with its neighbors.

We again focus on the elliptic fibration $f\colon S\rightarrow \P^1$ defined by 
$D_f=E_1+E_{12}+E_2+E_{23}+E_3+E_{13}$ as in Proposition \ref{NR}.
We saw that $E_4+B=E_4+\sigma_4(E_4)$ is a non-multiple fiber of $f$. 
Therefore the bisections $E_{14},E_{24},E_{34}$ of $f$ must intersect $B$. 
By a suitable choice of a bisection 
$C_f\in \{E_{14},E_{24},E_{34}\}$, we can assume that $C_f$ does not 
pass through the intersection $E_{4}\cap B$.
Then, since $g$ preserves all $(-2)$-curves, the curve $C_f$ has three distinct fixed points $E_4\cap C_f$, $B\cap C_f$ 
and $E_i\cap C_f$, where $E_i$ is the another vertex of the edge containing $C_f$
in Figure \ref{10A}. 
It follows that $g$ fixes $C_f$ pointwise, hence the singular curve $E_4+C_f$, too.
It follows that $g=\mathrm{id}_S$, since $g$ is tame and of finite order.
\end{proof}

In what follows 
we denote the hyperbolic lattice $NS(S)_f$ by $L$. 
Let us denote by $O'(L)$ the group of integral isometries 
whose $\R$-extensions preserve the positive cone of $L\otimes \R$.
We denote by $\Lambda$ the $9$-dimensional Lobachevsky space 
associated to the positive cone. 
Then $O'(L)$ acts on $\Lambda$ as a discrete group of motions.
We refer the readers to 
\cite{some} for the theory of discrete groups generated by reflections acting on 
Lobachevsky spaces. 

We let 
\[P^c =\{\R_+x\in PS(L)\mid (x,E)\geq 0 \text{ for all }E\in\{E_i,E_{ij},F_{ij},G_i\}\} \]
be the convex polyhedron defined by the twenty roots from the $10A+6B+4C$ configuration 
in the projective sphere 
$PS(L)=(L-\{0\})/\R_+$ (see \cite[Section 2]{some}). 
We have seen that every intersection number of distinct two divisors in $10A+6B+4C$
is in between $0$ and $2$,
hence the Coxeter diagram associated to these twenty roots has no 
dotted lines nor Lanner's subdiagrams. Also by an easy check
of the $10A+6B+4C$ configuration, we have the following.
\begin{lem}\label{maximal}
The Coxeter diagram of the polyhedron $P=P^c\cap \Lambda$ has exactly $29$ parabolic subdiagrams of maximal rank $8$.
They are as follows.
\begin{longtable}{c|c|c|c|c|c}
    & the subdiagram & number & $10A$ & $6B$ & $4C$ \\ \hline
$1)$ & $\tilde{E_7}+\tilde{A_1}$ & $12$ & $8$ & $1$ & $1$ \\
$2)$ & $\tilde{E_6}+\tilde{A_2}$ & $4$ & $7$ & $3$ & $0$ \\
$3)$ & $\tilde{D_6}+\tilde{A_1}+\tilde{A_1}$ & $6$ & $8$ & $1$ & $2$ \\
$4)$ & $\tilde{A_7}+\tilde{A_1}$ & $3$ & $8$ & $2$ & $0$ \\
$5)$ & $\tilde{A_5}+\tilde{A_2}+\tilde{A_1}$ & $4$ & $7$ & $3$ & $1$  
\end{longtable}
Here the each column $10A,6B,4C$ shows the number of vertices used from the 
configuration.
\end{lem}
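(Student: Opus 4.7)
The approach is a direct enumeration of the maximal parabolic subdiagrams of the twenty-vertex Coxeter diagram of $P$, exploiting the $\mathfrak{S}_4$-symmetry of the $10A+6B+4C$ configuration. Since every pairwise intersection among these $(-2)$-classes lies in $\{0,1,2\}$, each connected component of a parabolic subdiagram is either a simple-edge affine diagram $\tilde A_n$, $\tilde D_n$, or $\tilde E_n$ (from the intersection-$1$ edges), or an $\tilde A_1$ coming from a single bold (intersection-$2$) edge.

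First I would catalogue the bold edges: within $4C$ every pair $\{G_i,G_j\}$; within $6B$ the three complementary pairs $\{F_{ij},F_{kl}\}$; between $10A$ and $6B$ the six pairs $\{E_{ij},F_{ij}\}$; between $10A$ and $4C$ the four pairs $\{E_i,G_i\}$; and between $6B$ and $4C$ the twelve pairs $\{F_{ij},G_k\}$ with $k\notin\{i,j\}$. These impose strong exclusion rules: at most one $G_i$ can appear outside an $\tilde A_1$ component formed by two $G$'s; including $G_i$ excludes $E_i$ and the three $F_{jk}$ with $i\notin\{j,k\}$; including an $F_{ij}$ excludes $E_{ij}$, the complementary $F_{kl}$, and both $G_k,G_l$ with $\{k,l\}\cap\{i,j\}=\emptyset$. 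This severely limits how many vertices may be drawn from each of the three layers $10A$, $6B$, $4C$.

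Next I would enumerate simple-edge completions to rank $8$ inside $10A+6B$. The graph $10A$ is the $1$-skeleton of the edge-subdivided tetrahedron; a short case check shows that its only affine-Dynkin subgraphs of rank $\geq 5$ are $\tilde A_5$ (the boundary hexagon of a face), $\tilde D_6$ (obtained by deleting a corner $E_i$ together with a suitable midpoint on the opposite face), $\tilde E_6$, $\tilde E_7$, and $\tilde A_7$ (the octagon obtained by deleting a pair of opposite midpoints). In $6B$, the only simple-edge affine subdiagram of rank $\geq 2$ is the $\tilde A_2$ triangle formed by three $F_{ij}$ sharing a common index. Combining these with the admissible $\tilde A_1$ pieces from the previous step produces exactly the five types listed in the table.

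The main obstacle is verifying exhaustiveness and the multiplicities $12,4,6,3,4$. Each type has an evident stabilizer in $\mathfrak{S}_4$: for $\tilde E_7+\tilde A_1$ with $\tilde A_1=\{F_{ij},G_k\}$ the stabilizer fixes $\{i,j\}$ and $k$, has order $2$, and yields $24/2=12$; for $\tilde E_6+\tilde A_2$ the $\tilde A_2$ is pinned by the shared index $i$ and stabilized by $\mathfrak{S}_3$ on the other indices, yielding $24/6=4$; analogous computations produce $6,3,4$ for the remaining types. Ruling out any stray subdiagram outside this list then reduces to a short case analysis on the number of $G_i$'s and complementary $F$-pairs that can co-exist, using the exclusion rules of the catalogue above.
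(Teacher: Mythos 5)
The paper offers no argument for this lemma beyond the phrase ``by an easy check of the $10A+6B+4C$ configuration,'' so your systematic enumeration is exactly the kind of verification that is being left to the reader, and your overall skeleton is the right one: only intersection numbers $0,1,2$ occur, so each component is either a simple-edge affine diagram or a bold-edge $\tilde A_1$; one catalogues the bold edges, lists the simple-edge affine subgraphs of the subdivided tetrahedron $10A$ and of the octahedron graph $6B$, checks which completions to rank $8$ are orthogonal, and counts $\mathfrak{S}_4$-orbits. Your bold-edge catalogue and your stabilizer counts $24/2=12$, $24/6=4$, $24/4=6$, $24/8=3$, $24/6=4$ are all correct.

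Two of your intermediate claims, however, are wrong as stated and would derail the enumeration if followed literally. First, the simple-edge graph on $6B$ is the octahedron $K_{2,2,2}$ (the three bold edges are the antipodal pairs $\{F_{ij},F_{kl}\}$), which has \emph{eight} triangles: four ``star'' triangles $\{F_{ij}:j\neq i\}$ and four ``cyclic'' triangles $\{F_{ij},F_{jk},F_{ik}\}$. You allow only the star triangles, but the $\tilde A_2$ component of type 2) is necessarily a cyclic triangle: the $\tilde E_6$ in $10A$ is centered at a corner $E_m$ and contains the three midpoints $E_{mj}$, hence is orthogonal to $F_{kl}$ only for $m\notin\{k,l\}$, i.e.\ to the cyclic triangle on $\{1,2,3,4\}\setminus\{m\}$; no $\tilde E_6$ is orthogonal to a star triangle, so with your list you would find zero diagrams of type 2). (The star triangles do occur, but as the $\tilde A_2$ of type 5).) Second, your description of the $\tilde D_6$'s is off: deleting one corner $E_i$ and one midpoint $E_{jk}$ with $i\notin\{j,k\}$ leaves $8$ vertices and produces the twelve $\tilde E_7$'s of type 1), not a $\tilde D_6$ (which has $7$ vertices). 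The six $\tilde D_6$'s are obtained by deleting two corners $E_i,E_j$ together with the midpoint $E_{ij}$ joining them; the remaining seven curves form the central path $E_k$--$E_{kl}$--$E_l$ with two pendant midpoints at each end, and the unique rank-$2$ completion is $\{G_i,G_j\}+\{E_{ij},F_{ij}\}$. With these two corrections your argument closes up: each of the five affine subgraphs of $10A$ ($4$ hexagons, $3$ octagons, $4$ copies of $\tilde E_6$, $12$ of $\tilde E_7$, $6$ of $\tilde D_6$) has a unique orthogonal completion to rank $8$ among the twenty roots, giving exactly the $29$ diagrams of the table.
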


It is easy to check that every connected parabolic subdiagram is a connected component 
of some parabolic subdiagram of rank $8$, using the previous table.
By Theorem 2.6 bis of \cite{some},
we see that $P$ has finite volume and we obtain $P^c\subset \overline{\Lambda}$.
This polyhedron gives the fundamental domain of the associated 
discrete reflection group generated by 
twenty reflections in the twenty roots
$\{E_i,E_{ij}, F_{ij},G_{i}\}$, which we denote by $W=W(10A+6B+4C)$. 
Algebro-geometrically, sixteen of the generators are the 
Picard-Lefschetz transformations in $(-2)$-curves in the $10A+6B$ configuration
and the rest four are the involutions $\sigma_i$ $(i=1,\dots,4)$ corresponding to $4C$.
As an abstract group, we see that $W$ has the structure of a Coxeter group whose fundamental relations are 
given by the Coxeter diagram (see \cite{some}) of $P$. 
We note that 
the quasi-polarization (namely a nef and big divisor) 
\[H=\sum_{i}E_i+\sum_{i<j}E_{ij}\] 
defines an element $\R_+ H$ in $P$.

Now let $W(4C)$ be the subgroup of $W$ generated by four reflections in $G_i$. 
Via the homomorphism $r\colon \mathrm{Aut}(S)\rightarrow O(NS(S)_f)$, 
the subgroup $\mathcal{F}\subset \mathrm{Aut}(S)$ generated by the four numerically reflective 
involutions $\sigma_i$ is mapped onto this Coxeter subgroup $W(4C)\simeq C_2^{*4}$. 
It follows that $\F\simeq W(4C)$. 
Let $W(10A+6B)$ be the subgroup generated by sixteen reflections in $E_i, E_{ij}$ and $F_{ij}$
and let $\overline{N}(W(10A+6B))$ be the minimal normal subgroup of 
$W$ which contains $W(10A+6B)$.
Since the intersection numbers between elements of $4C$ and $10A+6B$ are all even, by 
\cite[Proposition]{Vinberg}, we have the exact sequence
\begin{equation}\label{seq}
\begin{CD}
1 @>>> \overline{N}(W(10A+6B)) @>>> W @>>> W(4C) @>>> 1.
\end{CD}
\end{equation}
The kernel is exactly the subgroup generated by the conjugates 
\[\{\sigma g \sigma^{-1}\mid \sigma\in W(4C), g\text{ a generator of }W(10A+6B))\}.\]
We have the corresponding geometric consequence as follows.
\begin{thm}\label{rat}
There are exactly sixteen smooth rational curves on $S$ up to the action of $\F$.
\end{thm}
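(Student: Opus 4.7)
The plan is to interpret the exact sequence \eqref{seq} geometrically by identifying the nef cone of $S$ with the fundamental domain $\Pi := \bigcup_{f \in \F} f \cdot P$ of the normal subgroup $\overline{N}(W(10A+6B))$, and then reading off the classification of smooth rational curves from the walls of $\Pi$. First I would check that the $16$ classes $\{E_i, E_{ij}, F_{ij}\}$ represent pairwise distinct $\F$-orbits. Since, by Proposition~\ref{NR}, the action of $\F$ on $NS(S)_f$ factors through $W(4C)$, it suffices to verify this at the level of $W(4C)$. The intersection formulas $(G_i, E_j) = 2\delta_{ij}$, $(G_i, E_{kl}) = 0$, and $(G_i, F_{kl}) = 2$ iff $i \notin \{k, l\}$ show that the $E_{kl}$ are fixed by $W(4C)$ (giving six singleton orbits), while the $E_j$ and the $F_{ij}$ have infinite $W(4C)$-orbits each meeting the list in exactly one element.

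For the main content, I would show that every smooth rational curve on $S$ is an $\F$-translate of one of the $16$. By the semi-direct product decomposition \eqref{seq}, the polytope $\Pi$ is a fundamental domain for $\overline{N}(W(10A+6B))$ acting on $\Lambda$, with walls precisely the $\F$-orbit of the $16$ walls of $P$ of type $10A+6B$; each such wall is a smooth rational curve, being an automorphic image of one of the original $16$. The key claim is $\Pi = \mathrm{Nef}(S)$. The inclusion $\Pi \subseteq \mathrm{Nef}(S)$ reduces to $P \subseteq \mathrm{Nef}(S)$, i.e., to showing that no effective $(-2)$-class outside the $\F$-orbit of $\{E_i, E_{ij}, F_{ij}\}$ violates any defining constraint of $P$; this rests on Vinberg's criterion, which tells us $W$ has finite index in $O'(NS(S)_f)$ and that the twenty walls of $P$ capture every reflection hyperplane touching $P$'s interior. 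For the reverse inclusion, given $v \in \mathrm{Nef}(S)$, I would use the $W(4C) \cong \F$-action to move $v$ into the Coxeter chamber $Q$ bounded by the $G_i$; then $v \in \mathrm{Nef}(S) \cap Q \subseteq P$, since the only walls of $\mathrm{Nef}(S)$ meeting $Q$ are the $16$ walls of $P$ from $10A+6B$.

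With $\Pi = \mathrm{Nef}(S)$ established, the smooth rational curves on $S$ are exactly the walls of the nef cone, and these fall into the $16$ $\F$-orbits identified above. The main obstacle is controlling the interior of $P$: verifying that no hyperplane of an effective $(-2)$-class outside the listed twenty roots slips into the interior of $P$, which is precisely where the Vinberg-chamber property of $P$ (established via Lemma~\ref{maximal}) is essential.
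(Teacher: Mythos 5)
Your reduction of the theorem to the equality $\Pi=\mathrm{Nef}(S)$ has a genuine gap at the forward inclusion $P\subseteq\mathrm{Nef}(S)$, and it sits exactly where you flagged ``the main obstacle.'' The claim that Vinberg's criterion shows ``the twenty walls of $P$ capture every reflection hyperplane touching $P$'s interior'' is not a valid deduction. What Lemma~\ref{maximal} together with Theorem 2.6 bis of \cite{some} gives is that $P$ has finite volume, hence $W=W(10A+6B+4C)$ has finite index in $O'(L)$ and $P^c\subseteq\overline{\Lambda}$. Neither fact rules out the hyperplane of some further $(-2)$-class --- in particular an effective one --- crossing the interior of $P$: a finite-index reflection subgroup of the full $(-2)$-reflection group of $L$ can perfectly well have a finite-volume chamber that is subdivided by root hyperplanes not among its walls. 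So your forward inclusion assumes essentially what is to be proved; indeed $P\subseteq\mathrm{Nef}(S)$ is a \emph{consequence} of the theorem (a posteriori every $(-2)$-curve hyperplane is a $W$-translate of a wall and so cannot cross a chamber), not an available ingredient. Your reverse inclusion $\mathrm{Nef}(S)\subseteq\Pi$ and the identification of the walls of $\Pi$ via \eqref{seq} are fine, but they do not suffice: a wall of $\mathrm{Nef}(S)$ could a priori lie in the interior of $\Pi$.

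The missing idea is to apply the finite-volume conclusion to \emph{points} rather than to hyperplanes. Given a smooth rational curve $E$, choose $E_0\in\F.E$ with $(E_0,H)$ minimal for the nef class $H=\sum_iE_i+\sum_{i<j}E_{ij}$; the identity $(\sigma_i(E_0),H)=(E_0,H)+(E_0,G_i)(G_i,H)$ forces $(E_0,G_i)\geq0$ for all $i$. If in addition $(E_0,E')\geq0$ for all sixteen curves $E'$ of $10A+6B$, then $\R_+E_0\in P^c\subseteq\overline{\Lambda}$, contradicting $(E_0^2)=-2$. Hence $E_0$ meets some curve of $10A+6B$ negatively, and since distinct irreducible curves meet non-negatively, $E_0$ \emph{is} one of the sixteen. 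This single observation replaces the whole chamber comparison. A second, smaller gap: you assert without argument that the $W(4C)$-orbits of the $E_j$ and of the $F_{ij}$ each meet the list of sixteen in exactly one element. The clean justification is that $(E_{ij},G_k)=0$ makes every $E_{ij}$ fixed by $\F$, so the sextuple $((E_{ij},E))_{1\leq i<j\leq4}$ is an $\F$-orbit invariant, and one checks it separates the sixteen curves.
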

\begin{proof}
Let $E$ be a smooth rational curve on $S$. We consider the orbit $\F.E$.
Since the divisor $H$ above is nef, we can choose $E_0\in \F.E$ such that the degree $(E_0, H)$ is minimal.
We shall show that $E_0$ is one of sixteen curves in $10A+6B$. 

In fact, by the automorphism $\sigma_i$, we have 
\begin{equation*}
(E_0,H)\leq (\sigma_i(E_0),H)=(E_0,H)+(E_0,G_i)(G_i,H),
\end{equation*}
and $0\leq (E_0,G_i)$
for all $i$. Suppose that $E_0$ intersects non-negatively to 
all the sixteen curves in $10A+6B$. Then we have $\R_+ E_0\in P^c$. But from 
$P^c\subset \overline{\Lambda}$, we obtain
$(E_0^2)\geq 0$, which is a contradiction. 
Hence $E_0$ is negative on some curve in $10A+6B$ and we see that $E_0$ 
is one of them.

Next let us show that two distinct curves $E,E'$ in the $10A+6B$ configuration are 
inequivalent under $\F$. For the six curves $E_{ij}$ from $10A$, we have $(E_{ij},G_k)=0$ 
for all $i<j$ and $k$. Therefore, by an easy induction, we see that the sextuple 
$((E_{ij},E))_{1\leq i<j\leq 4}$ consisting of intersection numbers 
is an invariant of the orbit $\F.E$. 
Suppose that $(E_{ij},E)=(E_{ij},E')$ for all $i<j$. Since $E$ and $E'$ both are in 
the $10A+6B$ configuration, we see easily that $E=E'$.
This shows that the orbits $\F.E$ and $\F.E'$ are disjoint.
\end{proof}
In another words, the group $\overline{N}(W(10A+6B))$ is nothing but the 
Weyl group of $S$ generated by Picard-Lefschetz reflections in all $(-2)$-curves.
We can proceed to elliptic pencils.
\begin{thm}\label{ellip}
There are exactly $29$ elliptic pencils on $S$ up to the action of $\F$. Their properties 
are as in the table of Theorem \ref{elliptic}.
\end{thm}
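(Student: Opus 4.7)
The proof parallels that of Theorem~\ref{rat}, now applied to primitive nef isotropic classes (i.e.\ half-fiber classes of elliptic pencils) in place of smooth rational curves, and extracting geometry from the classification in Lemma~\ref{maximal}.

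\textbf{Step 1 (a representative in $P^c$).} Let $f$ be a primitive nef isotropic class on $S$. Since $H$ is nef and big, $(f',H)>0$ for every nonzero $f'\in\mathcal{F}.f$, so we may choose $f_0\in\mathcal{F}.f$ minimizing $(f_0,H)$. Using $\sigma_i(f_0)=f_0+(f_0,G_i)G_i$, minimality gives $(f_0,G_i)(G_i,H)\ge 0$, and since $H$ is big and nef with $G_i^2=-2$ we have $(G_i,H)>0$; thus $(f_0,G_i)\ge 0$ for all $i$. As $\mathcal{F}\subset\mathrm{Aut}(S)$, the class $f_0$ is still nef, so $(f_0,E)\ge 0$ for every $E\in 10A+6B$. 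Hence $\R_+f_0\in P^c$. Because $f_0^2=0$, the ray $\R_+f_0$ lies in $P^c\cap\partial\Lambda$, i.e.\ at a vertex at infinity of $P$.

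\textbf{Step 2 (count via Vinberg).} By Vinberg's theory (invoked already via Theorem 2.6 bis of \cite{some} to get $P^c\subset\overline{\Lambda}$), the vertices at infinity of $P$ are in bijection with the maximal (rank $8$) parabolic subdiagrams of its Coxeter diagram. By Lemma~\ref{maximal} there are exactly $29$ such subdiagrams, giving at most $29$ orbits.

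\textbf{Step 3 (distinctness of orbits).} As in the proof of Theorem~\ref{rat}, the equalities $(E_{ij},G_k)=0$ for all $i<j$, $k$ make the sextuple $\bigl((E_{ij},f)\bigr)_{1\le i<j\le 4}$ an $\mathcal{F}$-invariant of the orbit. A direct inspection of the $29$ maximal parabolic subdiagrams (using the tabulated intersection numbers of $E_{ij}$ with the curves in $10A+6B$ and with the $G_i$) shows that this sextuple, supplemented if necessary by $\bigl((F_{ij},f)\bigr)$, separates the $29$ boundary vertices. Combined with Step~1 this gives exactly $29$ orbits of primitive nef isotropic classes, i.e.\ $29$ $\mathcal{F}$-orbits of elliptic pencils.

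\textbf{Step 4 (fibers and Mordell--Weil ranks).} For each representative $f_0$, the $(-2)$-curves orthogonal to $f_0$ are exactly the vertices of the maximal parabolic subdiagram that come from $10A+6B$ (the vertices from $4C$ are not effective by Corollary~\ref{G_i}, so contribute only to the Coxeter combinatorics and not to any fiber). The connected affine Dynkin components formed by these curves give the reducible singular fibers of the elliptic fibration $|2f_0|\colon S\to\P^1$, with Kodaira type read off from the affine type. In types $1),2),4)$ all $8$--$9$ curves in the parabolic are from $10A+6B$ and their shape matches the table. In type $3)$ one $\tilde{A}_1$ of the parabolic uses a vertex from $4C$, so only a single $\tilde{A}_1$ reducible fiber survives alongside the $\tilde{D}_6$; in type $5)$ the $\tilde{A}_5$-component is in fact a half-fiber (hence the $2\tilde{A}_5$ notation), in accordance with the fact that every Enriques elliptic fibration has exactly two half-fibers. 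The Mordell--Weil ranks in the table then follow from Shioda--Tate $\rho=2+\mathrm{MW}+\sum_i(m_i-1)$ applied to the Jacobian fibration, using $\rho=10$.

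\textbf{Main obstacle.} The polyhedral part (Steps 1--3) is a direct adaptation of the argument for Theorem~\ref{rat}. The real work is Step~4: bridging the formal Coxeter-combinatorial data of the parabolic subdiagram with the actual fiber geometry, namely identifying which affine components are ordinary reducible fibers, which are half-fibers, and (for type $3)$) understanding why a $\tilde{A}_1$-node supplied by a virtual class $G_i$ does not add a reducible fiber; together with the verification that the listed Mordell--Weil ranks are correct. This requires a case analysis of the five types using the explicit geometry of the pencil $\mathcal{L}$ in Proposition~\ref{NR} and Remark~\ref{limit}.
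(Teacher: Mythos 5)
Your Steps 1--2 reproduce the paper's first half (minimize $(f_0,H)$ over the $\F$-orbit, conclude $\R_+f_0\in P^c$, and match boundary points with the $29$ maximal parabolic subdiagrams of Lemma~\ref{maximal}), so the counting skeleton is sound. The genuine gap is in Step 4. Your central assertion there --- that the $(-2)$-curves orthogonal to $f_0$ are exactly the $10A+6B$-vertices of the parabolic subdiagram --- is false, and your grouping of type $1)$ with types $2)$ and $4)$ contradicts the table of Lemma~\ref{maximal}, which records one $4C$-vertex in every type $1)$ subdiagram. In types $1)$ and $5)$ one $\tilde{A_1}$-component is $\{E,G\}$ with $G\in 4C$ not effective; the actual reducible fiber through $E$ is $E+\sigma(E)$, where $\sigma$ is the reflection in $G$ and $\sigma(E)=E+2G$ is a $(-2)$-curve \emph{not} belonging to the $10A+6B$ configuration. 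The paper's proof consists precisely of this bridging step: it shows that $E+G$ is half the isotropic class of the other component, so that $E+\sigma(E)\sim 2(E+G)$ is a non-multiple fiber of type $\tilde{A_1}$, and in type $3)$ it shows that the component $\{G,G'\}$ made of two non-effective classes produces no reducible fiber but instead the translation $\sigma\sigma'$ generating a rank-one Mordell--Weil group. You correctly flag this as ``the real work'' in your closing paragraph, but you do not carry it out, and the sketch you do give is partly incorrect.

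A secondary problem is Step 3: $(F_{ij},f)$ is \emph{not} an $\F$-invariant, since $(F_{ij},G_k)=2$ for $k\notin\{i,j\}$, so the fallback ``supplemented if necessary by $\bigl((F_{ij},f)\bigr)$'' is unavailable; and the claim that the sextuple $\bigl((E_{ij},f)\bigr)$ alone separates the $29$ classes is asserted without the required computation. The paper instead deduces inequivalence of the $29$ pencils from Theorem~\ref{rat}, via the rational curves occurring in their fibers.
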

\begin{proof}
Let $2f$ be a fiber class of an elliptic pencil on $S$. 
As before, we choose an element $f_0\in \F.f$ such that the degree $(f_0,H)$ is minimal.
We have
\begin{equation*}
(f_0,H)\leq (\sigma_i(f_0),H)=(f_0,H)+(f_0,G_i)(G_i,H),
\end{equation*}
hence $(f_0,G_i)\geq 0$ for all $i$. Moreover since $f_0$ is nef we have $(f_0,E)\geq 0$
for all $E$ in the $10A+6B$ configuration.
Therefore $f_0\in P^c$. This shows that $f_0$ 
corresponds to one of maximal parabolic subdiagrams classified in Lemma \ref{maximal}.

Conversely, we can construct $29$ elliptic pencils from the $29$ subdiagrams in Lemma~\ref{maximal} as follows.
The two types 2) and 4) in the lemma are easiest since they do not contain a class in $4C$.
The elliptic pencils of types 2) and 4) have singular fibers of type $\tilde{E_6}+\tilde{A_2}$ and $\tilde{A_7}+\tilde{A_1}$, respectively as in the case of \cite[Table 2]{kondo86}.

In the case of type 1) (resp. 5)), one component of the parabolic subdiagram is an $\tilde{A_1}$ consisting of a $(-2)$-curve $E$ in  $6B$  (resp. 10A)  and $(-2)$-class $G$  in  $4C$.
Moreover, the sum $E+G$ is a half of $\tilde{E_7}$  (resp. $\tilde{A_2}$).
Hence $E+\sigma(E)$  is a non-multiple fiber of type $\tilde{A_1}$ since  it is linearly equivalent to $2(E+G)$, where  $\sigma$ is the reflection in  $G$.

In the case of type 3), one component is an $\tilde{A_1}$ consisting of two classes  $G$  and  $G'$  in  $4C$.
But the other two components consisting of $(-2)$-curves.
Therefore, the Mordell-Weil group is of rank one since neither $G$ or  $G'$ is effective.
(The composite $\sigma\sigma'$ of two reflections in $G$  and  $G'$ is the translation by a generator of the Mordell-Weil group.)

That these $29$ pencils are inequivalent under $\F$ follows from the previous result
for rational curves.
\end{proof}
To study the image of the representation $r\colon \mathrm{Aut}(S)\rightarrow O(NS(S)_f)$,
we need some lemma.
We denote by $4A'$ the set of four roots $\{E_i\}$ and by $6A''$ the set $\{E_{ij}\}$.  
Recall that by Theorem \ref{rat}, all $(-2)$-curves on $S$ are in the $\mathcal{F}$-orbit of the three
sets $4A', 6A''$ and $6B$.
\begin{lem}
Let $\tau$ be any automorphism of  $S$.
Then $\tau$ preserves each of the three orbits of rational curves $\F.(4A')$,
$\F.(6A'')$ and $\F.(6B)$.
\end{lem}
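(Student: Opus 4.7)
The plan is to distinguish the three $\F$-orbits of $(-2)$-curves by $\Aut(S)$-invariant intersection data. I use that $\tau$ permutes the set of $(-2)$-curves on $S$ preserving intersection numbers, and that by Theorem~\ref{rat} every $(-2)$-curve has the form $g\cdot E''$ for some $g\in\F$ and some $E''\in 10A+6B$.

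First, I would show $\tau$ preserves $\F.(6A'')=\{E_{ij}\}$ by characterizing it numerically. Since $(E_{ij},G_k)=0$ for all $i,j,k$, each $E_{ij}$ is fixed by every reflection $\sigma_k$, and for any $(-2)$-curve $E'=g\cdot E''$ one has $(E_{ij},E')=(g^{-1}E_{ij},E'')=(E_{ij},E'')$. Running through the intersection numbers from Sections~\ref{config} and \ref{involutions}, this shows that the maximum of $(E_{ij},E')$ over $(-2)$-curves $E'\neq E_{ij}$ is exactly $2$, attained by every element of $\F.F_{ij}$. On the other hand, for $k\notin\{l,m\}$ one computes
\[(E_k,\sigma_k(F_{lm}))=(E_k,F_{lm}+2G_k)=2(E_k,G_k)=4,\]
and symmetrically $(F_{ij},\sigma_m(E_m))=4$ for $m\notin\{i,j\}$. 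Since $(gE,E')=(E,g^{-1}E')$ shows the maximum intersection of $E$ with a $(-2)$-curve is constant on $\Aut(S)$-orbits of $E$, every element of $\F.(4A')$ or $\F.(6B)$ admits such an intersection of value at least $4$. Thus $\F.(6A'')$ is exactly the set of $(-2)$-curves whose maximum intersection with any other $(-2)$-curve is bounded by $2$, a condition preserved by $\tau$.

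Next, knowing that $\tau$ permutes $\{E_{ij}\}$, I would distinguish $\F.(4A')$ and $\F.(6B)$ via the six-term multiset $\{(E,E_{ij})\}_{1\le i<j\le 4}$. Since each $E_{ij}$ is $\F$-fixed, this multiset is constant on $\F$-orbits of $E$: for $E\in\F.E_k$ it is $\{1,1,1,0,0,0\}$ (the three $1$'s corresponding to pairs containing $k$), while for $E\in\F.F_{ij}$ it is $\{2,0,0,0,0,0\}$. As $\tau$ permutes $\{E_{ij}\}$ and preserves intersection numbers, this multiset is $\tau$-invariant, and since the two multisets differ, $\tau$ cannot exchange $\F.(4A')$ and $\F.(6B)$; combined with the first step, this gives the lemma.

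The main obstacle is the verification that $\F.(6A'')$ is the \emph{unique} orbit bounded by $2$ in the above sense; this is settled by the explicit $(-2)$-curves $\sigma_k(F_{lm})$ and $\sigma_m(E_m)$, whose intersections are immediate consequences of the intersection formulas between the classes $G_i$ and the $10A+6B$ configuration recorded in Section~\ref{involutions}.
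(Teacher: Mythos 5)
Your proof is correct, but it takes a genuinely different route from the paper's. The paper characterizes the three orbits inside the dual graph of all $(-2)$-curves by the existence of subgraphs of elliptic-fibre type: a triangle ($\mathrm{I}_3$) through a vertex detects $\F.(6B)$, an $\mathrm{I}_8$ detects $\F.(10A)$, and being an end of a $\mathrm{IV}^*$ subdiagram separates $\F.(4A')$ from $\F.(6A'')$; reducing any such subgraph to a standard one modulo $\F$ uses Theorem~\ref{ellip}. You instead use purely numerical invariants: the bound $\max_{E'}(E,E')\le 2$ over $(-2)$-curves $E'$ singles out $\F.(6A'')=\{E_{ij}\}$, with the classes $\sigma_k(F_{lm})=F_{lm}+2G_k$ and $\sigma_m(E_m)=E_m+2G_m$ correctly witnessing an intersection number $4$ on the other two orbits, and then the multiset of intersections with the six $\F$-fixed curves $E_{ij}$ (which is $\{1,1,1,0,0,0\}$ for $\F.(4A')$ and $\{2,0,0,0,0,0\}$ for $\F.(6B)$, and is $\tau$-invariant once $\tau$ is known to permute $\{E_{ij}\}$) separates the remaining two orbits. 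Your argument needs only Theorem~\ref{rat}, the fact that the three orbits partition the set of all $(-2)$-curves, and the intersection tables of Sections~\ref{config} and~\ref{involutions}; it is therefore somewhat more elementary, bypassing the classification of elliptic pencils, at the cost of the explicit reflection computations. The one point to keep in order is the logical dependence: the multiset invariant is only $\tau$-invariant after the first step has shown that $\tau$ permutes $\{E_{ij}\}$, which your write-up does respect.
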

\begin{proof}
Any automorphism $\tau$ permutes smooth rational curves on $S$,
hence induces a symmetry of the dual graph 
of the set of rational curves.
Thus, for the proof, it suffices to give a characterization of each orbit 
in terms of this infinite graph. 
We use the (full) subgraphs which are isomorphic to the 
dual graph of reducible fibers of elliptic fibrations.

Consider a vertex $v$ in $\F.(6B)$. Then there exists a subgraph of fiber type ${\rm I}_3$ 
passing through $v$. Conversely, if for a vertex $v$ there is a subgraph of fiber type ${\rm I}_3$, 
by Theorem \ref{ellip}, it is equivalent to a vertex in $6B$ under $\F$. Thus 
the vertices in $\F.(6B)$ are characterized by the property that there exists 
a subgraph of fiber type ${\rm I}_3$ passing through them. 

Similarly, vertices in $\F.(10A)$ are characterized by subgraphs of type ${\rm I}_8$. 
Moreover, the vertices $v$ in $\F.(4A')$ are characterized 
by the property that there exists a subgraph of type ${\rm IV}^*$ which has 
$v$ as its end. In the opposite way, vertices in $\F.(6A'')$ are those which 
does not have such ${\rm IV}^*$ subgraphs. Thus the three orbits are all characterized 
and $\tau$ preserves these orbits.
\end{proof}
\begin{cor}
The set of six curves $\{E_{ij}\}$ is preserved under any automorphism.
\end{cor}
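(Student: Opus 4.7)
The plan is to bootstrap the preceding lemma. It already tells us that any automorphism $\tau$ permutes the $\F$-orbit $\F.(6A'')$, so the only thing left to verify is that this orbit actually coincides with the finite set $\{E_{ij}\}_{1\le i<j\le 4}$ itself; equivalently, that $\F$ acts trivially on each of the six curves $E_{ij}$.

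For this I would appeal to the intersection formula recorded at the end of Section \ref{involutions}, namely $(G_i,E_{kl})=0$ for all $i$ and all $k<l$. Since Proposition \ref{NR} identifies each $\sigma_i$ with the reflection in $G_i$ on $NS(S)_f$, this orthogonality forces $\sigma_i^{*}[E_{kl}]=[E_{kl}]$ for every generator of $\F$. A smooth rational curve is the unique effective representative of its own $(-2)$-class, so numerical fixity lifts to $\sigma_i(E_{kl})=E_{kl}$ as a curve; iterating, the full free product $\F$ fixes each $E_{kl}$. Thus $\F.(6A'')=\{E_{ij}\}$, and the corollary drops out of the lemma.

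I do not anticipate any real obstacle: the whole step collapses to the single observation that the six curves $E_{ij}$ are orthogonal to all four reflection centers $G_1,\dots,G_4$. The substantive work has already been packaged into the preceding lemma and into the intersection table between the $10A$ and $4C$ configurations computed in Section \ref{involutions}.
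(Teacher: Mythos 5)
Your proposal is correct and follows the paper's argument: the paper's proof is exactly ``the lemma reduces everything to showing $\sigma_i(E_{kl})=E_{kl}$, hence $\F.(6A'')=\{E_{ij}\}$.'' The only cosmetic difference is that the paper takes the curve-level fixity $\sigma_i(E_{kl})=E_{kl}$ directly from the geometric description in the proof of Proposition~\ref{NR}, whereas you re-derive it from $(G_i,E_{kl})=0$ together with the uniqueness of the effective representative of a $(-2)$-class, which is equally valid.
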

\begin{proof}
In fact, for any $E_{kl}$ and any $\sigma_i$ we have $\sigma_i(E_{kl})=E_{kl}$. Hence 
$\mathcal{F}.(6A'')=\{E_{ij}\}$.
\end{proof} 
Recall that $S$ has the action by $\mathfrak{S}_4$ from the symmetry of the 
defining equation of $\overline{X}$. Explicitly, it acts on the curves in $10A+6B$ 
configuration by the permutation of indices. For involutions $\sigma_i$, the same 
holds true if we regard the action as taking conjugates. 
It is easy to see that this group $\mathfrak{S}_4$ can be 
identified with the symmetry group $\mathrm{Sym}(P)$ of the 
polyhedron $P\subset \Lambda$ via $r$. We can also regard this group as acting on the 
reflection group $W$ and the exact sequence \eqref{seq} is preserved under this action.
In particular, $W(4C)$ and $\mathrm{Sym}(P)$ generate a group isomorphic to 
$\mathfrak{S}_4\ltimes C_2^{*4}$.
\begin{thm}\label{mt}
The representation $r$ induces an isomorphism of $\mathrm{Aut}(S)$ onto the group generated by $W(4C)$ and $\mathrm{Sym}(P)$, hence we obtain $\mathrm{Aut}(S)\simeq 
\mathfrak{S}_4\ltimes C_2^{*4}$.
\end{thm}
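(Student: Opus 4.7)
Since $r$ is injective by Proposition~\ref{NT}, the plan is to identify the image $r(\mathrm{Aut}(S))$ inside $O'(NS(S)_f)$ with the group $G=\langle W(4C),\mathrm{Sym}(P)\rangle$. The inclusion $G\subseteq r(\mathrm{Aut}(S))$ is immediate: Proposition~\ref{NR} yields $r(\mathcal{F})=W(4C)$, and the paragraph preceding this theorem identifies $r(\mathfrak{S}_4)$ with $\mathrm{Sym}(P)$. So the content lies entirely in the reverse inclusion.

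For this I would first invoke Vinberg's theory. Since the twenty walls of $P$ satisfy Vinberg's condition and $P$ has finite covolume (verified after Lemma~\ref{maximal}), $W$ is the maximal reflection subgroup of $O'(NS(S)_f)$, and therefore
\[
O'(NS(S)_f) = W \rtimes \mathrm{Sym}(P).
\]
Given $\tau\in\mathrm{Aut}(S)$, decompose $r(\tau)=ws$ accordingly, and further decompose $w=n\sigma$ via the (split) exact sequence \eqref{seq} with $n\in\overline{N}(W(10A+6B))$ and $\sigma\in W(4C)$. Choose $\pi\in\mathfrak{S}_4$ with $r(\pi)=s$ and $f\in\mathcal{F}$ with $r(f)=\sigma$; then $r(\tau\pi^{-1}f^{-1})=n$, so $n$ is itself induced by an automorphism of $S$.

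The crux is to prove $n=1$. Here I would use Theorem~\ref{rat}: the $\mathcal{F}$-orbit of $10A+6B$ exhausts all smooth rational curves on $S$, so the normal closure $\overline{N}(W(10A+6B))$ coincides with the full Weyl group $W(S)$ generated by Picard--Lefschetz reflections in every $(-2)$-curve on $S$. The nef cone of $S$ is the closure of a Weyl chamber for $W(S)$ acting on the positive cone of $NS(S)_f$. Since $n\in W(S)$ is induced by an automorphism, it preserves the nef cone; but a Coxeter group acts simply transitively on its chambers, so $n=1$.

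Consequently $r(\tau)=\sigma s\in W(4C)\cdot\mathrm{Sym}(P)=G$, and injectivity of $r$ then gives $\tau=f\pi\in\mathcal{F}\cdot\mathfrak{S}_4$, proving $\mathrm{Aut}(S)=\mathcal{F}\cdot\mathfrak{S}_4$. The explicit semidirect structure $\mathfrak{S}_4\ltimes C_2^{*4}$ follows from the $\mathfrak{S}_4$-action on the four generators of $W(4C)\simeq\mathcal{F}$ by index permutation. The main technical input is the identification of $W$ with the full reflection subgroup of $O'(NS(S)_f)$, which is where the Vinberg condition does the real work; once this is granted, the reduction to the Weyl-group-preserves-nef-chamber argument is essentially formal.
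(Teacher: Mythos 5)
Your overall architecture (injectivity of $r$ from Proposition~\ref{NT}, the identification $\overline{N}(W(10A+6B))=W(S)$ via Theorem~\ref{rat}, and killing the $W(S)$-component of $r(\tau)$ by the simply transitive action of $W(S)$ on the chambers of the positive cone, of which the nef cone is one) is sound and is a reasonable strategy; the last of these steps is correct and would close the argument. But the step on which everything rests is false as stated: Vinberg's condition and the finite covolume of $P$ give only that $W=W(10A+6B+4C)$ has \emph{finite index} in $O'(L)$; they do not make $W$ the maximal reflection subgroup, nor normal in $O'(L)$, and the decomposition $O'(L)=W\rtimes \mathrm{Sym}(P)$ fails. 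Indeed, for $L=NS(S)_f\cong U\oplus E_8\cong E_{10}$ the maximal reflection subgroup is all of $O'(L)$ (the Coxeter group of the $T_{2,3,7}$ diagram, whose fundamental simplex has no symmetries), and $O'(L)$ acts transitively on $(-2)$-roots; since $\mathrm{Sym}(P)\cong\mathfrak{S}_4$ is nontrivial, $W$ is a \emph{proper} subgroup, so its set of mirrors is a proper subset of a single $O'(L)$-orbit and hence not $O'(L)$-invariant. Consequently $W$ is not normal and the coset decomposition $r(\tau)=ws$ with $w\in W$, $s\in\mathrm{Sym}(P)$ is exactly the assertion to be proved, not a consequence of Vinberg's theory.

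Concretely, what you can extract for free is only that $r(\tau)$ preserves the nef chamber $D$, which by the Vinberg--Proposition argument is tiled as $D=\bigcup_{\sigma\in W(4C)}\sigma(P)$. The missing content is that $r(\tau)$ permutes these tiles, i.e.\ preserves the family of internal walls $\sigma(G_i)^\perp$; since the classes $G_i$ are \emph{not} effective (Corollary~\ref{G_i}), there is no a priori geometric reason for an arbitrary automorphism to respect them, and this cannot be circumvented by lattice theory alone. The paper supplies precisely this input geometrically: using Theorem~\ref{ellip} and the transitivity of $\mathfrak{S}_4$ on the three pencils of type $\tilde A_7+\tilde A_1$, one composes $\tau$ with elements of $\F$ and $\mathfrak{S}_4$ so that it fixes the fibration $f=H-E_{12}-E_{34}$, then uses the invariance of the set $\{E_{ij}\}$ to conclude $\tau(H)=H$ and hence that $\tau$ preserves $P$. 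You would need to incorporate an argument of this kind (or some other intrinsic characterization of the walls $G_i^\perp$ inside $D$) before your reduction to the Weyl-chamber argument can begin.
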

\begin{proof}
Since $r$ maps $\F$ onto $W(4C)$, the image of $r$ includes the groups $W(4C)$ 
and $\mathrm{Sym}(P)$.

Conversely let us pick up an arbitrary automorphism $\tau$.
We consider the image $\tau (H)$ of $H$. 
We use the elliptic fibration defined by the divisor 
$f=H-E_{12}-E_{34}$ of type ${\rm I}_8$. 
By Theorem \ref{ellip}, the image $\tau (f)$ 
is equivalent to 
one of three elliptic pencils described in item $4)$ under $\F$. 
Moreover, since 
$\mathfrak{S}_4$ acts transitively on these three pencils, 
we can assume that $\tau (f)=f$ by composing $\tau$ with some elements 
of $\F$ and $\mathfrak{S}_4$.
Thus we have $\tau (H)=f+\tau (E_{12})+\tau(E_{34})$. 
By the previous corollary, $\tau (E_{12})$ and $\tau(E_{34})$ are in the set $\{E_{ij}\}$.
By an easy check of intersection numbers, we see that $\tau (E_{12})+\tau(E_{34})=
E_{12}+E_{34}$. In particular we obtain $\tau (H)=H$ as divisors.
Since any permutation of the $10A$ configuration can be induced from 
the automorphism group $\mathfrak{S}_4$, this shows that the image of $r$ is 
contained in the group generated by $W(4C)$ 
and $\mathrm{Sym}(P)$.
\end{proof}


\begin{thebibliography}{99}

\bibitem{BP} 
	{W. Barth and C. Peters},
	{Automorphisms of Enriques surfaces}, 
	{Invent. Math.},
	\textbf{73} (1983), 383--411.

\bibitem{BS}
	{A. Borel and J. P. Serre},
	{Corners and arithmetic groups}, 
	{Comment. Math. Helv.},
	\textbf{48} (1973), 436--491.

\bibitem{D}
	{I. Dolgachev},
	{On automorphisms of Enriques surfaces},
	{Invent. Math.},
	\textbf{76} (1984), 163--177.

\bibitem{D12}
	{I. Dolgachev},
	{Numerical trivial automorphisms of Enriques surfaces in arbitrary characteristic},
	in {\it Arithmetic and Geometry of $K3$ surfaces and Calabi-Yau threefolds}, 
	Fields Institute Communications {\bf 67}, 2013, pp. 267--283.

\bibitem{kondo86} 
	{S. Kondo},
	{Enriques surfaces with finite automorphism groups},
	{Japan. J. Math.},
	\textbf{12} (1986), 191--282.

\bibitem{M10}
	{S. Mukai},
	{Numerically trivial involutions of Kummer type of an Enriques surface},
	{Kyoto J. Math.},
	\textbf{50} (2010), 889--902.

\bibitem{mukai-ref}
	{S. Mukai},
	{Kummer's quartics and numerically reflective involutions of Enriques surfaces},
	{J. Math. Soc. Japan},
	\textbf{64} (2012), 231--246. 

\bibitem{Fields}
	{S. Mukai and H. Ohashi},
	{Enriques surfaces of Hutchinson-G\"opel type and Mathieu automorphisms},
	in {\it Arithmetic and Geometry of $K3$ surfaces and Calabi-Yau threefolds}, 
	Fields Institute Communications {\bf 67}, 2013, pp. 429--454.

\bibitem{N}
	{V. V. Nikulin},
	{On a description of the automorphism groups of Enriques surfaces},
	{Soviet Math. Dokl.},
	\textbf{30} (1984), 282--285.

\bibitem{Serre}
	{J. P. Serre},
	{Cohomologie des groupes discrets},
	{Prospects in mathematics (Proc. Symp., Princeton Univ., Princeton, N.J., 1970)},
	pp. 77--169. 
	{Ann. of Math. Studies, No. 70, Princeton Univ. Press, Princeton, N.J., 1971.}

\bibitem{Vinberg}
	{E. B. Vinberg},
	{The two most algebraic $K3$ surfaces},
	{Math. Ann.},
	\textbf{265} (1983), 1--21.

\bibitem{some}
	{E. B. Vinberg},
	{Some arithmetical discrete groups in Loba\^{c}evski\^{i} spaces},
	      in {\it Discrete Subgroups of Lie Groups and Appl. to Moduli (Bombay 1973)},
	Oxford University Press, 1975, pp. 323--348.
\end{thebibliography}
\end{document}